  \newtheorem{theorem}{Theorem}
 \newtheorem{definition}{Definition}
  \newtheorem{corollary}{Corollary}
    \newtheorem{proposition}{Proposition}
\DeclareMathOperator{\Div}{div}
\DeclareMathOperator{\Id}{Id}
\begin{document}

 \title{Forces for the Navier--Stokes equations and the Koch and Tataru theorem.}
\author{Pierre Gilles Lemari\'e-Rieusset\footnote{LaMME, Univ Evry, CNRS, Universit\'e Paris-Saclay, 91025, Evry, France; e-mail : pierregilles.lemarierieusset@univ-evry.fr}}
\date{}\maketitle

{\it In dedication to Olga Ladyzhenskaya's 100th birthday}
 
\begin{abstract}
We consider the  Cauchy problem for the incompressible Navier--Stokes equations on the whole space $\mathbb{R}^3$, with initial value $\vec u_0\in {\rm BMO}^{-1}$ (as in Koch and Tataru's theorem) and with force $\vec f=\Div \mathbb{F}$ where smallness of $\mathbb{F}$ ensures existence of a mild solution in absence of initial value.  We study  the interaction of the two solutions and discuss the existence of global solution for the complete problem (i.e. in presence of initial value and forcing term) under smallness assumptions. In particular, we discuss the interaction between Koch and Tataru solutions and Lei-Lin's solutions (in $L^2\mathcal{F}^{-1}L^1$)  or solutions in the multiplier space $\mathcal{M}(\dot H^{1/2,1}_{t,x}\mapsto L^2_{t,x})$. \end{abstract}
 
\noindent{\bf Keywords : }   Navier--Stokes equations, critical spaces, parabolic Sobolev spaces, parabolic Morrey spaces, mild solutions.\\

\noindent{\bf AMS classification : } 35K55, 35Q30, 76D05.

  In this paper, we consider global mild solutions of the Cauchy problem for the incompressible Navier--Stokes equations on the whole space $\mathbb{R}^3$. When looking for assumptions that respect the symmetries of the Navier--Stokes equations (with respect to spatial translation or to dilations), one is lead to consider the initial data to be in $BMO^{-1}$ (this is the famous Koch and Tataru theorem \cite{KOT}) but there is no natural choice for the forcing term. We are going to consider forces that are known to lead to global mild solutions (if they are small enough) in the absence of initial value, but the interaction between those forces and an initial  value in $BMO^{-1}$ or between forces in different functional spaces has not been discussed in the literature.
  
   \section{Navier--Stokes equations with a forcing term and the Koch and Tataru theorem}

  Let us give a short description of mild solutions in critical spaces for the Navier--Stokes equations. 
We shall look for minimal regularity assumptions for thhe solution $\vec u$. It is therefore  better to write the non-linear term $\vec u\cdot \vec\nabla\vec u $ in the Navier--Stokes equations as $\Div (\vec u\otimes\vec u)$ (the two vector fields are equal when $\vec u$ is a regular divergence free vector field). The Navier--Stokes equations we study are then
 \begin{equation}\left\{
 \begin{split} &\partial_t \vec u =\Delta\vec u-\vec \nabla p +\Div (\mathbb{F}-\vec u\otimes\vec u)
\\& \Div\vec u=0
\\& \vec u(0,.)=\vec u_0
\end{split}\right.\end{equation}
Taking the divergence of the first equation, we get
$$ \Delta p=(\vec\nabla\otimes\vec\nabla)\cdot  (\mathbb{F}-\vec u\otimes\vec u)$$ and thus
 \begin{equation}\label{pressure} \Delta\vec\nabla p=\vec\nabla\left((\vec\nabla\otimes\vec\nabla)\cdot  ( \mathbb{F}-\vec u\otimes\vec u)\right) .\end{equation}  Assuming that $\vec\nabla p$ is equal to $0$ at infinity, equation (\ref{pressure}) defines $p$ as a function of $\mathbb{F}$ and $\vec u$. More precisely, if $\mathbb{F}-\vec u\otimes\vec u$ is assumed to belong to $L^1_{\rm loc}((0,+\infty), L^1(\frac{dx}{1+\vert x\vert^4}))$, then \cite{FLR} shows that the solution $\vec\nabla p$ of equation (\ref{pressure})  which is equal to $0$ at infinity is given by the formula $$\vec\nabla p=\frac 1 \Delta \vec\nabla\left((\vec\nabla\otimes\vec\nabla)\cdot  ( \mathbb{F}-\vec u\otimes\vec u)\right)$$ where, writing 
 $$ G(x)=\frac 1{4\pi\vert x\vert}$$ for the fundamental solution of $-\Delta$ $$(-\Delta G=\delta\text{ so that, for }f\in\mathcal{D}(\mathbb{R}^3),  f=G*(-\Delta f))$$ and choosing a function $\psi\in\mathcal{D}(\mathbb{R}^3)$ which is equal to $1$ on a neighbourhood of $0$, $ \frac 1 \Delta \partial_i\partial_j\partial_k $ is defined as
  $$ \frac 1 \Delta \partial_i\partial_j\partial_k f= -   \partial_i\partial_j\partial_k \left((\psi G)*f\right)  -   \left(\partial_i\partial_j\partial_k ((1-\psi) G)\right)*f. $$
    Thus, we have an equation with one unknown $\vec u$ and two data $\vec u_0$ and $\mathbb{F}$.
 
 Defining (formally) the Leray projection operator $\mathbb{P}$ as
 $$ \mathbb{P}=\Id -\frac 1\Delta \vec\nabla \Div,$$ the Navier--Stokes equations then become
  \begin{equation}\left\{
 \begin{split} &\partial_t \vec u =\Delta\vec u+\mathbb{P}\Div (\mathbb{F}-\vec u\otimes\vec u)
\\& \vec u(0,.)=\vec u_0, \quad \Div\vec u_0=0
\end{split}\right.\end{equation}
 This is viewed as a non-linear heat equation and is transformed into the Duhamel formula
 \begin{equation}\label{oseen} \vec u= e^{t\Delta} \vec u_0+\int_0^t e^{(t-s)\Delta} \mathbb{P}\Div(\mathbb{F}-\vec u\otimes\vec u)\, ds\end{equation} where $e^{t\Delta}$ is the  convolution operator with the heat kernel:   $ e^{t\Delta}f=W_t*f$ with
 $$ W_t=\frac 1{(4\pi t)^{3/2}} e^{-\frac{x^2}{4t}}.$$
 
 We rewrite (\ref{oseen}) as
 \begin{equation}\label{oseenbis} \vec u= e^{t\Delta} \vec u_0+\mathcal{L}(\mathbb{F})-  B(\vec u,\vec v) \end{equation} where $$ \mathcal{L}(\mathbb F)=\int_0^t e^{(t-s)\Delta}\mathbb{P}\Div\mathbb{F}\, ds$$ and $$B(\vec u,\vec v)=\int_0^t e^{(t-s)\Delta}\mathbb{P}\Div(\vec u\otimes\vec v)\, ds.$$ 
 [Remark that we defined $\mathbb{P}\Div\mathbb{F}$ for regular enough tensors $\mathbb{F}$, i.e. $\mathbb{F}\in L^1_{\rm loc}((0,+\infty), L^1(\frac{dx}{1+\vert x\vert^4}))$, but we may also consider more singular data $\mathbb{F}$, as long as we are able to give a sense to $\mathcal{L}(\mathbb{F})$.]

 To have lighter computations and notations, it is better to forget the vectorial setting of the problem and to look at the bilinear operator $B$ as a family of scalar operators acting on scalar functions: if $\vec w=B(\vec u,\vec v)$ then $w_i=\sum_{1\leq j,k\leq 3} B_{i,j,k}(u_j,v_k)$ with
 $$ B_{i,j,k}(u,v)= \int_0^t e^{t-s)\Delta} (\delta_{i,k}\partial_j-\frac 1{\Delta} \partial_i\partial_j\partial_k)(uv)\, ds.$$
 More generally, we define $\mathfrak{S}_1$ as the space of smooth functions $\sigma$  on $\mathbb{R}^3$ which are positively homogeneous of degree $1$ ($\sigma(\lambda\xi)=\lambda\sigma(\xi)$ for $\lambda>0$). To $\sigma\in \mathfrak{S}_1$, we associate the Fourier multiplier $\sigma(D)$ and the operator
  $$ B_\sigma(u,v)= \int_0^t e^{t-s)\Delta}  \sigma(D)(uv)\, ds.$$
 
 The formalism of global mild solutions of the Cauchy problem for the Navier--Stokes equations is then described by the following  definition and proposition:
 \begin{definition}$\ $\\
 An \textbf{adapted  }  Banach space  is a  Banach space $\mathcal{Y}$  of   locally   integrable  functions on $(0,+\infty)\times\mathbb{R}^3$   such   that, for  every $\sigma\in \mathfrak{S}_1$, the bilinear operator $B_\sigma$  is   bounded on $\mathcal{Y}$: 
 $$ \|  B_\sigma(  u,  v))\|_ \mathcal{Y}\leq C_\sigma     \|  u\|_\mathcal{Y} \|  v\|_\mathcal{Y}.$$  
 \end{definition}

 For vector fields   $\vec u$ with coordinates in $\mathcal{Y}$, we shall write $\vec u\in \mathcal{Y}$ instead of $\vec u\in \mathcal{Y}^3$. The following theorem  is then easy to check  (through the Banach contraction principle):
 
 \begin{proposition}$\ $\\
 Let 
 $\mathcal{Y}$  be an  adapted  Banach space. If $\vec u_0$ is a (divergence free) vector fields of tempered distribution on $\mathbb{R}^3$ such $e^{t\Delta}\vec u_0 \in \mathcal{Y}$, $\mathcal{L}(\mathbb{F})\in\mathcal{Y}$ and if $\vec u_0$  and $\mathbb{F}$ are small enough:
 $$ 4 C_0( \| e^{t\Delta}\vec u_0\|_\mathcal{Y}+  \|\mathcal{L}(\mathbb{F})\|_\mathcal{Y})<1,$$ where $C_0$ is the norm of the  bilinear operator $B$ on $\mathcal{Y}^3$,  then the Navier--Stokes problem (\ref{oseen}) has a global solution $\vec u$ with
 $$ \|\vec u\|_\mathcal{Y}\leq 2(  \| e^{t\Delta}\vec u_0\|_\mathcal{Y}+  \|\mathcal{L}(\mathbb{F})\|_\mathcal{Y}).$$
 \end{proposition}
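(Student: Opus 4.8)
The plan is to recognize equation (\ref{oseenbis}) as a quadratic fixed point problem of the form $\vec u = \vec y_0 - B(\vec u, \vec u)$ in the Banach space $\mathcal{Y}^3$, where $\vec y_0 := e^{t\Delta}\vec u_0 + \mathcal{L}(\mathbb{F})$, and to solve it by the Banach contraction principle. I would write $\eta := \|e^{t\Delta}\vec u_0\|_\mathcal{Y} + \|\mathcal{L}(\mathbb{F})\|_\mathcal{Y}$, so that $\vec y_0 \in \mathcal{Y}^3$ by hypothesis with $\|\vec y_0\|_\mathcal{Y} \leq \eta$, and the smallness assumption reads $4 C_0 \eta < 1$. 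Since $\mathcal{Y}$ is adapted, each scalar operator $B_\sigma$ is bounded on $\mathcal{Y}$; consequently the full vector-valued operator $B$ is bounded on $\mathcal{Y}^3$ with $\|B(\vec u, \vec v)\|_\mathcal{Y} \leq C_0 \|\vec u\|_\mathcal{Y} \|\vec v\|_\mathcal{Y}$, and $C_0$ is precisely the constant appearing in the statement.

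Next I would introduce the map $\Phi(\vec u) := \vec y_0 - B(\vec u, \vec u)$ and show that, for a well-chosen radius $R$, it maps the closed ball $\{\vec w \in \mathcal{Y}^3 : \|\vec w\|_\mathcal{Y} \leq R\}$ into itself and acts there as a strict contraction. The stability estimate $\|\Phi(\vec u)\|_\mathcal{Y} \leq \eta + C_0 R^2$ shows that $\Phi$ preserves the ball as soon as $C_0 R^2 - R + \eta \leq 0$; because $4 C_0 \eta < 1$, this quadratic has two positive roots, and I would take $R = R_- := (1 - \sqrt{1 - 4 C_0 \eta})/(2 C_0)$, the smaller one. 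For the contraction estimate, bilinearity gives $\Phi(\vec u) - \Phi(\vec v) = -B(\vec u - \vec v, \vec u) - B(\vec v, \vec u - \vec v)$, whence $\|\Phi(\vec u) - \Phi(\vec v)\|_\mathcal{Y} \leq 2 C_0 R \|\vec u - \vec v\|_\mathcal{Y}$; with $R = R_-$ one has $2 C_0 R_- = 1 - \sqrt{1 - 4 C_0 \eta} < 1$, so $\Phi$ is indeed a contraction. The Banach fixed point theorem then furnishes a unique $\vec u$ in that ball solving $\vec u = \vec y_0 - B(\vec u, \vec u)$, which is exactly (\ref{oseen}).

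Finally I would convert the bound $\|\vec u\|_\mathcal{Y} \leq R_-$ into the stated estimate by the elementary identity $R_- = \frac{1 - \sqrt{1 - 4 C_0 \eta}}{2 C_0} = \frac{2\eta}{1 + \sqrt{1 - 4 C_0 \eta}} \leq 2\eta$, which yields $\|\vec u\|_\mathcal{Y} \leq 2(\|e^{t\Delta}\vec u_0\|_\mathcal{Y} + \|\mathcal{L}(\mathbb{F})\|_\mathcal{Y})$.

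As for the difficulty, the argument is entirely the abstract Picard scheme for quadratic equations in Banach spaces, so no step presents a genuine obstacle once adaptedness of $\mathcal{Y}$ is granted. The only point requiring care is the bookkeeping of the radius: choosing $R$ so that stability and contractivity hold \emph{simultaneously}, and then extracting the sharp constant $2$ in the a priori bound from the identity above. All analytic content, namely the boundedness of the Oseen-type bilinear operator, has been absorbed into the definition of an adapted space and into the constant $C_0$.
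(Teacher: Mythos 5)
Your proof is correct and is exactly the argument the paper has in mind: the paper simply invokes the Banach contraction principle without writing out details, and your Picard scheme on the ball of radius $R_-=\bigl(1-\sqrt{1-4C_0\eta}\bigr)/(2C_0)$, together with the identity $R_-=2\eta/\bigl(1+\sqrt{1-4C_0\eta}\bigr)\leq 2\eta$, supplies precisely those details, including the sharp constant $2$ in the a priori bound.
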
 

  Navier--Stokes equations have symmetries. In particular, we have the two following properties: if $\vec u$ is a solution of the Navier--Stokes problem (\ref{oseen}) with data $\vec u_0$ and $\mathbb{F}$, then
 \begin{itemize}
 \item[$\bullet$] [space translation] $\vec u(t,x-x_0)$ is a solution of the Navier--Stokes problem (\ref{oseen}) with data $\vec u_0(x-x_0)$ and $\mathbb{F}(t,x-x_0)$,
 \item[$\bullet$] [space dilation]  if $\lambda>0$, $\lambda\vec u(\lambda^2 t,\lambda x)$ is a solution of the Navier--Stokes problem (\ref{oseen}) with data $\lambda\vec u_0(\lambda x)$ and $\lambda^2\mathbb{F}(\lambda^2 t, \lambda x)$.
\end{itemize} 

In particular, we shall look for \textbf{critical spaces}  $  \mathcal{Y}$, meaning that we have invariance of the norms under space translations and space dilations: for every $x_0\in\mathbb{R}^3$ and $\lambda>0$
\begin{equation*} 
\|  u(t,x-x_0)\|_\mathcal{Y}=\|  u\|_\mathcal{Y},\phantom{blablabla} \|\lambda  u(\lambda^2t,\lambda x)\|_\mathcal{Y}=\|  u\|_\mathcal{Y}.
 \end{equation*}
Finally, in order to give sense to the formula
$$ \vec u(t,.)=\vec u_0+\Delta \int_0^t \vec u(s,.)\, ds+\mathbb{P}\Div(\int_0^t \mathbb{F}-\vec u\otimes\vec u\, ds),$$
we require the continuous embedding   $\displaystyle \mathcal{Y}\subset \bigcap_{T>0} L^2((0,T),L^2(\frac{dx}{1+\vert x\vert^4}))$  [Due to the invariance through space translations or space dilations, it is equivalent to ask that $  u\mapsto\mathds{1}_{(0,1)\times B(0,1)}  u $ is bounded from $\mathcal{Y}$ to $L^2((0,1)\times B(0,1))$.]

In particular, we have $$ \mathcal{Y}\subset  \mathcal{Y}_2=\{  u\ /\ \sup_{t>0, x_0\in \mathbb{R}^3}  t^{-3/4} \|  u\|_{L^2((0,t)\times B(x_0,\sqrt t))} <+\infty\}.$$ Thus, $ \mathcal{Y}_2$ is maximal in the class of Banach spaces $\mathcal{Y}$ that satisfy the conditions $\|  u(t,x-x_0)\|_\mathcal{Y}=\|  u\|_\mathcal{Y}$, $\|\lambda  u(\lambda^2t,\lambda x)\|_\mathcal{Y}=\|  u\|_\mathcal{Y}$ and $$\sup_{\|  u\|_\mathcal{Y}\leq 1} \int_0^1\int_{B(0,1)} \vert  u(s,y)\vert^2\, ds\, dy<+\infty.$$

Koch and Tataru \cite{KOT}  identified the space $\mathcal{X}$ such that $  u_0\in \mathcal{X}$ implies $S(  u_0)\in \mathcal{Y}_2$: 

\begin{proposition}$\ $\\  For a tempered distribution $  u_0$, the following assertions are equivalent:
\begin{itemize}
\item[(i)] $e^{t\Delta}  u_0\in \mathcal{Y}_2$;
\item[(ii)]                 $ u_0\in {\rm BMO}^{-1}=\dot F^{-1}_{2,\infty}$ (i.e., there exists $ \vec  v_0$ in ${\rm BMO}^3$ such that $  u_0=\Div\vec v_0$).
\end{itemize} 
\end{proposition}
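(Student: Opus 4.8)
The starting observation is that the condition $e^{t\Delta}u_0\in\mathcal{Y}_2$ is precisely a Carleson measure condition. Writing $R=\sqrt t$, the square of the defining quantity $t^{-3/4}\|e^{\cdot\Delta}u_0\|_{L^2((0,t)\times B(x_0,\sqrt t))}$ equals $R^{-3}\int_0^{R^2}\int_{B(x_0,R)}|e^{s\Delta}u_0(y)|^2\,dy\,ds$, so that $\|e^{t\Delta}u_0\|_{\mathcal{Y}_2}^2$ is the Carleson norm of the measure $|e^{s\Delta}u_0(y)|^2\,dy\,ds$ relative to the parabolic boxes $Q_R(x_0)=(0,R^2)\times B(x_0,R)$. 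The natural tool is then the Fefferman--Stein characterization of ${\rm BMO}$ through the heat semigroup: a locally integrable function $g$ lies in ${\rm BMO}(\mathbb{R}^3)$ if and only if $|\nabla e^{s\Delta}g(y)|^2\,dy\,ds$ is a Carleson measure, with Carleson norm comparable to $\|g\|_{\rm BMO}^2$. The whole Proposition will be obtained by passing, over such boxes, between the divergence $e^{s\Delta}u_0=\Div e^{s\Delta}\vec v_0$ and the full gradient $\nabla e^{s\Delta}\vec v_0$.

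The implication (ii)$\Rightarrow$(i) is the easy one. If $u_0=\Div\vec v_0$ with $\vec v_0\in{\rm BMO}^3$, then $e^{s\Delta}u_0=\sum_{j}\partial_j e^{s\Delta}v_0^{\,j}$, whence the pointwise bound $|e^{s\Delta}u_0|\leq\sqrt3\,|\nabla e^{s\Delta}\vec v_0|$. Integrating over $Q_R(x_0)$ and applying the elementary direct half of the Fefferman--Stein inequality,
$$ \int_{Q_R(x_0)}|e^{s\Delta}u_0|^2\,dy\,ds\leq 3\int_{Q_R(x_0)}|\nabla e^{s\Delta}\vec v_0|^2\,dy\,ds\lesssim R^3\,\|\vec v_0\|_{\rm BMO}^2, $$
which is exactly $\|e^{t\Delta}u_0\|_{\mathcal{Y}_2}\lesssim\|\vec v_0\|_{\rm BMO}<\infty$.

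The implication (i)$\Rightarrow$(ii) is the substantial one: one must manufacture a primitive $\vec v_0\in{\rm BMO}$ of $u_0$. Formally, from $\partial_s e^{s\Delta}u_0=\Delta e^{s\Delta}u_0$ and the vanishing of $e^{s\Delta}u_0$ as $s\to+\infty$ one gets the reproducing identity $u_0=-\Div\int_0^\infty\nabla e^{s\Delta}u_0\,ds$, which suggests setting $\vec v_0=-\int_0^\infty\nabla e^{s\Delta}u_0\,ds$. To conclude $\vec v_0\in{\rm BMO}$ it then suffices, by the converse half of Fefferman--Stein, to prove that $|\nabla e^{t\Delta}\vec v_0|^2\,dy\,dt$ is a Carleson measure. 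Now $\nabla e^{t\Delta}\vec v_0=-\int_0^\infty\nabla^2 e^{(t+s)\Delta}u_0\,ds$, and the substitution $r=(t+s)/2$ rewrites this as $-2\int_{t/2}^\infty(\nabla^2 e^{r\Delta})\big(e^{r\Delta}u_0\big)\,dr$. Thus, with the Carleson datum $W(r,\cdot)=e^{r\Delta}u_0$ furnished by the hypothesis, everything reduces to a Carleson-to-Carleson boundedness statement for the operator $W\mapsto\int_{t/2}^\infty(\nabla^2 e^{r\Delta})\,W(r,\cdot)\,dr$. I would prove this through the $L^2$ bound $\|\nabla^2 e^{r\Delta}\|_{L^2\to L^2}\lesssim 1/r$ together with the Gaussian off-diagonal decay of its kernel, using an almost-orthogonality (Schur-type) argument on each box $Q_R$, or equivalently by invoking the duality $T^\infty_2=(T^1_2)^*$ between Carleson and tent spaces and bounding the adjoint.

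I expect (i)$\Rightarrow$(ii) to be the main obstacle, and two points in it to be genuinely delicate. First, the reproducing formula has to be justified for a general tempered distribution $u_0$: the integral $\int_0^\infty\nabla e^{s\Delta}u_0\,ds$ converges only modulo constants---as is unavoidable, since ${\rm BMO}$ is a space of functions defined modulo constants---so I would work with the truncations $\int_\varepsilon^{1/\varepsilon}$ and pass to the limit using the uniformity of the Carleson bounds, after first checking that the hypothesis $e^{t\Delta}u_0\in\mathcal{Y}_2$ already confines $u_0$ to a class for which $e^{s\Delta}u_0$ is meaningful and decays at $s=+\infty$. Second, the almost-orthogonality estimate transferring the Carleson property across the kernels $\nabla^2 e^{r\Delta}$ is the technical heart: since $\int_{t/2}^\infty dr/r$ diverges, the raw operator norms do not suffice, and it is the square-function/tent-space structure, rather than a naive absolute-value estimate, that tames the slowly decaying tail $r>R^2$; handling that far part, where the spatial Gaussian decay must be played against the Carleson growth of $W$, is where the argument needs the most care.
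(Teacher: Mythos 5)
First, a point of comparison: the paper itself does not prove this proposition --- it is quoted from Koch and Tataru \cite{KOT}, where ${\rm BMO}^{-1}$ is in effect \emph{defined} by the Carleson-measure condition and the identification with $\Div({\rm BMO})$ is referred back to classical Fefferman--Stein theory. So your proposal can only be measured against that classical argument, and in structure it \emph{is} that argument: the identification of $\|e^{t\Delta}u_0\|_{\mathcal{Y}_2}^2$ with the Carleson norm of $|e^{s\Delta}u_0|^2\,dy\,ds$ is correct; the implication (ii)$\Rightarrow$(i) via $|e^{s\Delta}u_0|\leq\sqrt 3\,|\nabla e^{s\Delta}\vec v_0|$ and the direct half of the heat-semigroup Fefferman--Stein characterization of ${\rm BMO}$ is correct; and for (i)$\Rightarrow$(ii) the choice $\vec v_0=-\int_0^\infty\nabla e^{s\Delta}u_0\,ds=\nabla(-\Delta)^{-1}u_0$, checked in ${\rm BMO}$ through the converse half of Fefferman--Stein, together with the rewriting $\nabla e^{t\Delta}\vec v_0=-2\int_{t/2}^\infty(\nabla^2e^{r\Delta})(e^{r\Delta}u_0)\,dr$, is the standard construction. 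You also correctly flag the two genuinely delicate points (convergence of the defining integral only modulo constants; the Carleson-to-Carleson almost-orthogonality bound). I see no step that would fail.

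The caveat is that, as written, this is a blueprint rather than a proof: the technical core is named, not executed. Two items would have to be supplied. First, the ``decay and meaningfulness'' of $e^{s\Delta}u_0$: since $U(s,y)=e^{s\Delta}u_0(y)$ is caloric, the parabolic mean-value inequality gives $|U(s,x)|^2\lesssim s^{-5/2}\iint_{(3s/4,s)\times B(x,\sqrt s/2)}|U|^2\,dy\,d\sigma\lesssim s^{-1}\|e^{t\Delta}u_0\|_{\mathcal{Y}_2}^2$, i.e. $\sup_{s>0}\sqrt s\,\|e^{s\Delta}u_0\|_\infty\lesssim\|e^{t\Delta}u_0\|_{\mathcal{Y}_2}$; note this same bound is what actually handles your ``far'' range $r>R^2$ (there the mechanism is the extra decay $\|(\nabla^2e^{r\Delta})e^{r\Delta}u_0\|_\infty\lesssim r^{-3/2}$, not orthogonality --- orthogonality is what handles the near range $t/2<r<R^2$, where $\int dr/r$ diverges as $t\to0$). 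Second, the Carleson-to-Carleson bound itself: it is true and classical, and after the substitution $r=\tau^2$ it is exactly the Coifman--Meyer--Stein tent-space mapping $\pi_\phi:T^\infty_2\to{\rm BMO}$, $\pi_\phi F=\int_0^\infty\phi_\tau*F(\tau,\cdot)\,\frac{d\tau}{\tau}$ with $\phi$ Schwartz and mean zero, applied to $F(\tau,\cdot)=\tau e^{\tau^2\Delta}u_0$, whose $T^\infty_2$ norm is comparable to $\|e^{t\Delta}u_0\|_{\mathcal{Y}_2}$. In fact you can streamline your argument by applying this mapping directly to $\vec v_0=-4\int_0^\infty\phi_\tau*F(\tau,\cdot)\,\frac{d\tau}{\tau}$ (where $\phi_\tau$ is the kernel of $\sqrt r\,\nabla e^{r\Delta}$, $r=\tau^2$), which yields $\vec v_0\in{\rm BMO}$ at once, without routing through the converse Fefferman--Stein criterion for $\nabla e^{t\Delta}\vec v_0$. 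If citing Fefferman--Stein and the tent-space duality is admissible --- and it should be, since the paper itself simply cites \cite{KOT} --- your proposal closes; without such citations, the heart of (i)$\Rightarrow$(ii) remains to be written out.
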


However, $  \mathcal{Y}_2$ is not an adapted  Banach space  (see a counter-example in Proposition \ref{propy2} in the Appendix). 
 The Koch and Tataru theorem deals with a subspace of $\mathcal{Y}_2$. We define the  space 
$$  \mathcal{Z}_{0}=\{  u\in L^1_{\rm loc}((0,+\infty)\times\mathbb{R}^3\ /\  \sup_{t>0}\sqrt t  \|  u(t,.)\|_\infty<+\infty\}.$$ The Koch and Tataru space 
 $\mathcal{Y}_{KT}$ is then defined as:
$$  \mathcal{Y}_{KT}=  \mathcal{Y}_2\cap \mathcal{Z}_0 .$$   $\mathcal{Y}_{KT}$ is normed with $\|  u\|_{\mathcal{Y}_{KT}}= \|  u\|_{\mathcal{Y}_2}+\sup_{t>0} \sqrt t\|  u(t,.)\|_\infty$, where
$$  \|  u\|_{\mathcal{Y}_2}=\sup_{t>0, x_0\in \mathbb{R}^3}  t^{-\frac 3 {4}} \|   u\|_{L^2((0,t)\times B(x_0,\sqrt t))} .$$

Koch and Tataru's theorem is then the following one \cite{KOT, PGL1}:

\begin{theorem}\label{kocht}$\ $\\ A) For    every $\sigma\in \mathfrak{S}_1$, the bilinear operator $B_\sigma$  is      a bounded bilinear operator from  $\mathcal{Y}_{KT}\times\mathcal{Y}_2$  to  $\mathcal{Y}_2$. It is also a bounded bilinear operator from  $\mathcal{Y}_{KT}\times\mathcal{Y}_{KT}$  to  $\mathcal{Y}_{KT}$.
\\ B)   The following assertions are equivalent:
\begin{itemize}
\item[(i)] $e^{t\Delta}  u_0\in \mathcal{Y}_{KT}$;
\item[(ii)]                 $ u_0\in {\rm BMO}^{-1}$;
\end{itemize} 
C)
 $ \mathcal{Y}_{KT}$  is an  adapted  Banach space.
Thus, there exists a positive constant $\epsilon_0$ such that, if $\|e^{t\Delta}\vec u_0\|_{\mathcal{Y}_{KT}}+\| \mathcal{L}(\mathbb{F})\|_{\mathcal{Y}_{KT}}<\epsilon_0$, then the Navier--Stokes problem (\ref{oseen}) has a global mild solution $\vec u\in\mathcal{Y}_{KT}$.
\end{theorem}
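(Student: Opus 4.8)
The plan is to establish parts A), B), and C) in that order, since each builds on the previous one.

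For part A), the strategy is to prove the bilinear estimate by exploiting the two different norms that define $\mathcal{Y}_{KT}$. The key observation is that $B_\sigma(u,v)$ involves the kernel of $e^{(t-s)\Delta}\sigma(D)$, which for $\sigma$ homogeneous of degree $1$ is a Calder\'on--Zygmund-type kernel with the parabolic scaling $|x|\sim\sqrt{t-s}$. To show boundedness into $\mathcal{Y}_2$, I would split the estimate into a local part (controlling the $L^2$ average over $(0,t)\times B(x_0,\sqrt t)$) and use the $L^\infty$-control coming from the $\mathcal{Z}_0$ factor. Concretely, writing $uv$ with one factor in $\mathcal{Y}_{KT}$ (hence $|v(s,y)|\le \|v\|_{\mathcal{Y}_{KT}} s^{-1/2}$) and the other in $\mathcal{Y}_2$, I would estimate $\sqrt s\, u(s,\cdot)v(s,\cdot)$ in a suitable averaged norm and then integrate the kernel. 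The $\mathcal{Y}_2$ norm is designed precisely so that the parabolic Carleson-measure structure of these averages is preserved under the action of $B_\sigma$; this is the content of the Koch--Tataru estimate and is the place where one must be careful about the logarithmic endpoint behaviour of the kernel. For the $\mathcal{Y}_{KT}\times\mathcal{Y}_{KT}\to\mathcal{Y}_{KT}$ statement, one additionally controls the $\mathcal{Z}_0$ component: $\sqrt t\,\|B_\sigma(u,v)(t,\cdot)\|_\infty$ is bounded using $\|u(s,\cdot)v(s,\cdot)\|_\infty\le \|u\|_{\mathcal{Z}_0}\|v\|_{\mathcal{Z}_0} s^{-1}$ together with the $L^\infty\to L^\infty$ bound for the kernel of $e^{(t-s)\Delta}\sigma(D)$, which scales like $(t-s)^{-1/2}$, and the resulting integral $\int_0^t (t-s)^{-1/2} s^{-1}\,ds$ converges after the standard splitting at $s=t/2$.

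For part B), the equivalence of $e^{t\Delta}u_0\in\mathcal{Y}_{KT}$ with $u_0\in{\rm BMO}^{-1}$, I would invoke the already-stated characterization (the proposition identifying $\mathcal{X}$ with ${\rm BMO}^{-1}$) which gives $e^{t\Delta}u_0\in\mathcal{Y}_2 \iff u_0\in{\rm BMO}^{-1}$. It then remains to check that for such $u_0$ one automatically has $e^{t\Delta}u_0\in\mathcal{Z}_0$, i.e. $\sup_{t>0}\sqrt t\,\|e^{t\Delta}u_0\|_\infty<\infty$; this is a direct consequence of writing $u_0=\Div\vec v_0$ with $\vec v_0\in{\rm BMO}$ and using the smoothing estimate $\sqrt t\,\|e^{t\Delta}\Div\vec v_0\|_\infty\lesssim \|\vec v_0\|_{\rm BMO}$, which follows from the decay of $\vec\nabla W_t$ paired against a BMO function. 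The converse inclusion is immediate since $\mathcal{Y}_{KT}\subset\mathcal{Y}_2$.

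For part C), that $\mathcal{Y}_{KT}$ is an adapted Banach space is now an immediate corollary of part A) (take $u=v$ in the $\mathcal{Y}_{KT}\times\mathcal{Y}_{KT}\to\mathcal{Y}_{KT}$ bound), and the global existence statement follows by applying the abstract Proposition on adapted spaces with $\mathcal{Y}=\mathcal{Y}_{KT}$, choosing $\epsilon_0=1/(4C_0)$. I expect the main obstacle to be entirely contained in part A): the delicate kernel estimate for $B_\sigma$ on $\mathcal{Y}_2$, where one must show that the parabolic Carleson-type quantity defining $\|\cdot\|_{\mathcal{Y}_2}$ is reproduced, handling both the near-diagonal singularity of the kernel and the far-field decay, and crucially using the $\mathcal{Z}_0$ factor to absorb the borderline term that would otherwise fail to be controlled by $\mathcal{Y}_2$ alone (this is exactly why $\mathcal{Y}_2$ itself is not adapted, per the cited counterexample).
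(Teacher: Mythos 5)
Your treatment of B) and C) is fine: B) follows from the already-stated $\mathcal{Y}_2$-characterization of ${\rm BMO}^{-1}$ plus the smoothing bound $\sqrt t\,\|e^{t\Delta}\Div \vec v_0\|_\infty\leq C\|\vec v_0\|_{\rm BMO}$, and C) is just the abstract fixed-point proposition applied to A). The problem is that A), which is the entire substance of the theorem, is never actually proved. For the bound $\mathcal{Y}_{KT}\times\mathcal{Y}_2\to\mathcal{Y}_2$ you propose to replace $|v(s,y)|$ by $s^{-1/2}\|v\|_{\mathcal{Z}_0}$, ``integrate the kernel,'' and for the rest you appeal to ``the content of the Koch--Tataru estimate'' --- but that estimate is precisely the statement under proof, so this is circular. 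Moreover, no purely pointwise kernel estimate can close this step: integrating $(\sqrt{t-s}+\vert x-y\vert)^{-4}$ against $s^{-1/2}\vert u(s,y)\vert$ loses a logarithm (same phenomenon as below), which is exactly why $\mathcal{Y}_2$ itself fails to be adapted. The paper's proof supplies the mechanism you are missing: after discarding the far-field part $w_1=B_\sigma(u,(1-\mathds{1}_{Q_{10T,x}})v)$ (which needs only $\mathcal{Y}_2$ norms), one splits the local part into the frozen-semigroup term $w_2=\sigma(D)e^{t\Delta}\int_0^t\mathds{1}_{Q_{10T,x}}uv\,ds$, handled by Koch and Tataru's main lemma on $\sqrt{-\Delta}\,e^{t\Delta}\int_0^t$ (again only $\mathcal{Y}_2$ norms), and the difference term $w_3=\sigma(D)\int_0^t(e^{(t-s)\Delta}-e^{t\Delta})\sqrt{-\Delta}(\mathds{1}_{Q_{10T,x}}uv)\,ds$, which is rewritten so that maximal regularity of the heat semigroup in $L^2L^2$ gives $\|w_3\|_{L^2L^2}\leq C\|\sqrt t\,\mathds{1}_{Q_{10T,x}}uv\|_{L^2L^2}\leq C\|\sqrt t\,u\|_\infty\|\mathds{1}_{Q_{10T,x}}v\|_{L^2L^2}$. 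Only in $w_3$ does the $\mathcal{Z}_0$ factor enter; your proposal identifies neither this decomposition nor the maximal-regularity argument, which together are the proof.

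There is also a concrete error in your $L^\infty$ estimate for the $\mathcal{Y}_{KT}\times\mathcal{Y}_{KT}\to\mathcal{Y}_{KT}$ bound. You claim that $\int_0^t(t-s)^{-1/2}s^{-1}\,ds$ ``converges after the standard splitting at $s=t/2$''; it does not, since
\begin{equation*}
\int_0^{t/2}(t-s)^{-1/2}s^{-1}\,ds\ \geq\ t^{-1/2}\int_0^{t/2}\frac{ds}{s}=+\infty ,
\end{equation*}
a logarithmic divergence at $s=0$. The splitting at $t/2$ only helps if on $(0,t/2)$ you abandon the $L^\infty\times L^\infty$ bound and instead use the local $L^2$ (i.e.\ $\mathcal{Y}_2$) information of \emph{both} factors, as the paper does:
\begin{equation*}
\int_0^{t/2}\int\frac{\vert u(s,y)\vert\,\vert v(s,y)\vert}{(\sqrt{t-s}+\vert x-y\vert)^4}\,dy\,ds\ \leq\ C\,t^{-1/2}\|u\|_{\mathcal{Y}_2}\|v\|_{\mathcal{Y}_2},
\end{equation*}
reserving the bound $\|u(s,\cdot)v(s,\cdot)\|_\infty\leq s^{-1}\|u\|_{\mathcal{Z}_0}\|v\|_{\mathcal{Z}_0}$ for $s\in(t/2,t)$, where $s^{-1}\simeq t^{-1}$ and the kernel integrates to $C\sqrt t$. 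Were your $L^\infty$-only argument valid, the space $\mathcal{Z}_0$ by itself would already be adapted, and it is not: the interplay between the two halves of the $\mathcal{Y}_{KT}$ norm --- the $L^2$-Carleson part for $s<t/2$, the $L^\infty$ part for $s>t/2$ --- is exactly the point of the theorem and cannot be dispensed with.
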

 
 \begin{proof} We sketch the proof given by Koch and Tataru in \cite{KOT}, and try to highlight the obstructing term for proving the boundedness of $B_\sigma$ on $\mathcal{Y}_2$.  A simple but key estimate is the following control:
 $$      \vert e^{(t-s)\Delta}\sigma(D)(u(s,.)v(s,.))\vert\leq C_\sigma  \int \frac 1{(\sqrt{t-s}+\vert x-y\vert)^4} \vert u(s,y)\vert\, \vert v(s,y)\vert\, dy.
 $$                                 
  We need to estimate, for every $T>0$ and $x\in\mathbb{R}^3$, 
$\| \mathds{1}_{Q_{T,x}}  B_\sigma(u,v)\|_{L^2L^2}$ where
$$Q_{T,x}=\{(t,y)\ /\ 0<t<T, \vert x-y\vert\leq \sqrt T\} .$$
Koch and Tataru split $w=B_\sigma(u,v)$ in three parts: 
\begin{itemize}
\item[$\bullet$] $w_1=B_\sigma(u,(1-\mathds{1}_{Q_{10 T,x}} )v)$ : we  easily check that $ \mathds{1}_{Q_{ T,x}} \vert w_1\vert\leq C \frac 1 {\sqrt T} \|u\|_{\mathcal{Y}_2} \|v\|_{\mathcal{Y}_2}$ and thus $$\|  \mathds{1}_{Q_{T,x}}  w_1\|_{L^2L^2}\leq C T^{3/4} \|u\|_{\mathcal{Y}_2} \|v\|_{\mathcal{Y}_2}.$$
\item[$\bullet$] $w_2(t,y)= \sigma(D) e^{t\Delta}\int_0^t \mathds{1}_{Q_{10 T,x}} uv\, ds$. The main lemma in Koch and Tataru's proof states that the operator
$ Q(u,v)= \sqrt{-\Delta}e^{t\Delta}\int_0^t \mathds{1}_{Q_{10 T,x}} uv\, ds$ maps $\mathcal{Y}_2\times \mathcal{Y}_2$ to $L^2 L^2$ with a norm of order $T^{3/4}$ and thus
$$\|  \mathds{1}_{Q_{T,x} } w_2\|_{L^2L^2}\leq  \|w_2\|_{L^2 L^2} \leq C T^{3/4} \|u\|_{\mathcal{Y}_2} \|v\|_{\mathcal{Y}_2}.$$
\item[$\bullet$] $w_3(t,y)= \sigma(D) \int_0^t  (e^{(t-s)\Delta}-e^{t\Delta}) \sqrt{-\Delta}(\mathds{1}_{Q_{10 T,x}} uv)\, ds$. They rewrite $w_3$ as
$$ w_3 =\frac{\sigma(D)}{\sqrt{-\Delta}}\int_0^t e^{(t-s)\Delta} \Delta \frac{e^{s\Delta}-\Id}{\sqrt{-\Delta}} (\mathds{1}_{Q_{10 T,x}} uv)\, ds$$ and use the maximal regularity of the heat kernel in $L^2L^2$ to write
$$ \|w_3\|_{L^2L^2}\leq C\| \frac{e^{t\Delta}-\Id}{\sqrt{-\Delta}} (\mathds{1}_{Q_{10 T,x}} uv)\|_{L^2L^2}\leq C' \|\sqrt t  \mathds{1}_{Q_{10 T,x}} uv\|_{L^2L^2}.$$ Thus,
$$\|  \mathds{1}_{Q_{T,x}}  w_3\|_{L^2L^2}\leq  \|w_3\|_{L^2 L^2} \leq C \|\sqrt t u\|_\infty \|\mathds{1}_{Q_{10 T,x}} v\|_{L^2L^2}\leq C T^{3/4} \|u\|_{\mathcal{Y}_{KT}} \|v\|_{\mathcal{Y}_2}.$$
\end{itemize} \textbf{Thus, the obstruction for the boundedness of $B_\sigma$ on $Y_2$ lies in $w_3$.}

 To finish the proof, we need to  establish the  control of $B_\sigma$ in $L^\infty$ norm. Writing 
 $$  \vert B_\sigma(u,v)(t,x)\vert\leq C_\sigma \int_0^t \int \frac 1{(\sqrt{t-s}+\vert x-y\vert)^4} \vert u(s,y)\vert\, \vert v(s,y)\vert\, dy\, ds,$$ we check that
$$ \int_0^{t/2} \int \frac 1{(\sqrt{t-s}+\vert x-y\vert)^4} \vert u(s,y)\vert\, \vert v(s,y)\vert\, dy\, ds\leq C \frac 1{\sqrt{t}} \|u\|_{\mathcal{Y}_2} \|v\|_{\mathcal{Y}_2}$$
 and
\begin{equation*}\!\int_{t/2}^t \!\int \!\frac { \vert u(s,y)\vert\, \vert v(s,y)\vert}{(\sqrt{t-s}+\vert x-y\vert)^4}\, dy\, ds\leq  \frac C{\sqrt{t}} \sup_{s>0}\sqrt s \|u(s,.)\|_\infty \sup_{s>0} \sqrt s \|v(s,.)\|_\infty.
{\qedhere} \end{equation*}
\end{proof}

The proof in Theorem \ref{kocht} is assumed to satisfy $\mathcal{L}(\mathbb{F})\in{\mathcal{Y}_{KT}}$, but one may   consider another forcing term; We have the obvious result:

\begin{proposition}\label{propclas}$\ $  \\ Let $ \mathcal{Y}$  be an  adapted  Banach space such that, for every    $\sigma\in \mathfrak{S}_1$, the bilinear operator $B_\sigma$  is a bounded bilinear operator from  $\mathcal{Y} \times L^{2,\infty}L^\infty$  to  $\mathcal{Y}$.  
Then, there exists a positive constant $\epsilon_0$ such that, if $\| \vec u_0\|_{{\rm BMO}^{-1}}+\| \mathcal{L}(\mathbb{F})\|_{\mathcal{Y}}<\epsilon_0$, then the Navier--Stokes problem (\ref{oseen}) has a global mild solution $\vec u\in\mathcal{Y}_{KT}+\mathcal{Y}$.
 \end{proposition}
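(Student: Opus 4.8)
The plan is to split the sought solution as $\vec u=\vec u_1+\vec u_2$, where $\vec u_1\in\mathcal{Y}_{KT}$ carries the initial value and $\vec u_2\in\mathcal{Y}$ carries the force, and to solve the two pieces in succession. First I would dispose of the initial value alone: since $\|\vec u_0\|_{{\rm BMO}^{-1}}$ is small, the quantitative form of Theorem \ref{kocht} (parts B and C) makes $\|e^{t\Delta}\vec u_0\|_{\mathcal{Y}_{KT}}$ small and guarantees that the pure Cauchy problem $\vec u_1=e^{t\Delta}\vec u_0-B(\vec u_1,\vec u_1)$ has a solution $\vec u_1\in\mathcal{Y}_{KT}$ with $\|\vec u_1\|_{\mathcal{Y}_{KT}}\leq 2\|e^{t\Delta}\vec u_0\|_{\mathcal{Y}_{KT}}$. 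This $\vec u_1$ is then a fixed background field.

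The arithmetic of the two equations forces $\vec u_2$ to satisfy $\vec u_2=\mathcal{L}(\mathbb{F})-B(\vec u_1,\vec u_2)-B(\vec u_2,\vec u_1)-B(\vec u_2,\vec u_2)$; indeed, adding this to the equation for $\vec u_1$ and using the bilinearity of $B$ reproduces the full Duhamel equation (\ref{oseen}) for $\vec u_1+\vec u_2$. So it remains to solve this fixed-point problem for $\vec u_2$ in $\mathcal{Y}$. The two straightforward contributions are clear: $\mathcal{L}(\mathbb{F})\in\mathcal{Y}$ is small by hypothesis, and $B(\vec u_2,\vec u_2)\in\mathcal{Y}$ with norm $\leq C_0\|\vec u_2\|_{\mathcal{Y}}^2$ because $\mathcal{Y}$ is adapted.

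The crux is to control the cross terms $B(\vec u_1,\vec u_2)$ and $B(\vec u_2,\vec u_1)$ inside $\mathcal{Y}$, and this is exactly where the extra hypothesis on $\mathcal{Y}$ and the structure of $\mathcal{Y}_{KT}$ combine. Here I would use two observations. First, each $B_\sigma$ is symmetric, $B_\sigma(u,v)=B_\sigma(v,u)$, since it acts on the product $uv$; hence it suffices to estimate one of the two orderings. Second, $\mathcal{Y}_{KT}$ embeds into $L^{2,\infty}L^\infty$: if $u\in\mathcal{Z}_0$ then $\|u(t,\cdot)\|_\infty\leq \|u\|_{\mathcal{Z}_0}/\sqrt t$, and $t\mapsto 1/\sqrt t$ lies in the weak space $L^{2,\infty}(0,+\infty)$ with norm $1$, so $\|\vec u_1\|_{L^{2,\infty}L^\infty}\leq \|\vec u_1\|_{\mathcal{Y}_{KT}}$. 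With these two facts, the assumption $B_\sigma:\mathcal{Y}\times L^{2,\infty}L^\infty\to\mathcal{Y}$ yields $\|B_\sigma(\vec u_1,\vec u_2)\|_{\mathcal{Y}}=\|B_\sigma(\vec u_2,\vec u_1)\|_{\mathcal{Y}}\leq A\,\|\vec u_1\|_{\mathcal{Y}_{KT}}\,\|\vec u_2\|_{\mathcal{Y}}$, where $A$ is the largest of the finitely many constants attached to the symbols occurring in $B$.

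Finally I would close the argument by the Banach contraction principle applied to the map $\Phi(\vec w)=\mathcal{L}(\mathbb{F})-B(\vec u_1,\vec w)-B(\vec w,\vec u_1)-B(\vec w,\vec w)$ on a ball of $\mathcal{Y}$. The linear part $\vec w\mapsto B(\vec u_1,\vec w)+B(\vec w,\vec u_1)$ has operator norm $\leq 2A\|\vec u_1\|_{\mathcal{Y}_{KT}}\leq 4A\|e^{t\Delta}\vec u_0\|_{\mathcal{Y}_{KT}}$, which is as small as we wish once $\|\vec u_0\|_{{\rm BMO}^{-1}}$ is small; the quadratic part is governed by $C_0$. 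Choosing $\epsilon_0$ so that both $\|e^{t\Delta}\vec u_0\|_{\mathcal{Y}_{KT}}$ and $\|\mathcal{L}(\mathbb{F})\|_{\mathcal{Y}}$ fall below the thresholds that make $\Phi$ map a ball $\{\|\vec w\|_{\mathcal{Y}}\leq R\}$ into itself as a contraction gives a unique $\vec u_2\in\mathcal{Y}$, whence $\vec u=\vec u_1+\vec u_2\in\mathcal{Y}_{KT}+\mathcal{Y}$ is the desired global mild solution. The only genuine obstacle is the cross-term estimate of the third paragraph, namely recognizing that $\mathcal{Y}_{KT}\subset L^{2,\infty}L^\infty$ renders the background field $\vec u_1$ admissible for the assumed boundedness of $B_\sigma$ on $\mathcal{Y}\times L^{2,\infty}L^\infty$; everything else is standard fixed-point bookkeeping.
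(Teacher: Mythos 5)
Your proof is correct, and it supplies exactly the argument the paper omits: Proposition \ref{propclas} is stated there as an ``obvious result'' with no written proof, the point being precisely the two observations you isolate, namely that the scalar operators $B_\sigma$ depend only on the product $uv$ (hence are symmetric, so both orderings of the cross terms are controlled) and that $\mathcal{Z}_0\subset L^{2,\infty}L^\infty$ because $\|u(t,\cdot)\|_\infty\leq \|u\|_{\mathcal{Z}_0}t^{-1/2}$ and $t^{-1/2}$ has weak-$L^2$ norm $1$ on $(0,+\infty)$ --- this embedding is the whole reason the hypothesis is phrased with the Lorentz space $L^{2,\infty}$ rather than $L^2$. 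The only structural difference worth noting is one of packaging: your route solves the forceless Koch--Tataru problem first and then runs a perturbed fixed point for $\vec u_2$ in $\mathcal{Y}$ around the fixed background $\vec u_1$, whereas the paper's framework (Proposition 1, and the way Theorem \ref{maintheo} D is later handled) suggests verifying instead that $B$ is bounded on the sum space $\mathcal{Y}_{KT}+\mathcal{Y}$ --- using Theorem \ref{kocht} A for the $\mathcal{Y}_{KT}\times\mathcal{Y}_{KT}$ block, adaptedness for the $\mathcal{Y}\times\mathcal{Y}$ block, and your two observations for the mixed blocks --- and then invoking the contraction principle once in that single adapted space. The two routes rest on identical estimates; yours has the small advantage of decoupling the two smallness conditions and exhibiting the solution as (Koch--Tataru solution) $+$ ($\mathcal{Y}$-perturbation), while the sum-space route is more uniform and reusable, which is how the paper exploits it later.
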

 
 Let us notice that many adapted spaces studied in the literature satisfy the assumption of Proposition \ref{propclas}. Here are some examples:
 \begin{enumerate}
 \item[a)]  the Serrin class $\mathcal{Y}=L^{p}((0,+\infty), L^{q}(\mathbb{R}^3))$ with $2<p<+\infty$ and $\frac 2 p + \frac 3 q=1$   [this corresponds to the solutions of Fabes, Jones and Rivi\`ere [Fab72]);
 \item[b)]  direct generalizations of the Serrin class such that $\mathcal{Y}=L^{p,\rho}((0,+\infty)$, $\mathcal{Y}=L^{q,\sigma}(\mathbb{R}^3))$  or $\mathcal{Y}=L^{p,s}((0,+\infty), \dot M^{r,q}(\mathbb{R}^3))$ with $2<p<+\infty$, $\frac 2 p + \frac 3 q=1$, $1\leq \rho,\sigma\leq +\infty$ and $1<r\leq q$;
 \item[c)] the time-weighted Serrin class:  
$$\mathcal{Y}=\{u\ /\ t^{\alpha} u\in L^{p,\rho}((0,+\infty), L^{q,\sigma}(\mathbb{R}^3))\}$$
  with $3<q<+\infty$,  $2<p\leq+\infty$, $0\leq \alpha$, $1\leq \rho,\sigma\leq +\infty$ and $2\alpha+\frac 2 p + \frac 3 q=1$ [if $p=+\infty$, $L^{p,\rho}$ is to be replaced with $L^\infty$] (this corresponds to the solutions  considered by  Cannone and Planchon  \cite{CPL} or Kozono and Yamazaki \cite{KOY} and more recently by Farwig, Giga and Shu \cite{FGS} and Kozono and Shimizu \cite{KOS});
  \item[d)] the case $\mathcal{Y}=L^{\infty}((0,+\infty), L^{3,\infty} (\mathbb{R}^3))$, which is the endpoint of the Serrin class $L^p L^q$ with $p=+\infty$ and corresponds  to  the solutions of Kozono \cite{KOZ}  and Meyer \cite{MEY};
    \item[e)] the case of the Lorentz space $L^{5,\rho}_{t,x}=L^{5,\rho}((0,+\infty)\times\mathbb{R}^3)$ with $1\leq \rho\leq +\infty$  seems to be new but is easy:  to check that $L^{5,\rho}_{t,x}$ is an adapted Banach space, just notice that $\frac 1{(\sqrt{\vert t\vert}+\vert x\vert)^4}\in L^{5/4,\infty}_{t,x}$ and use convolution inequalities in Lorentz spaces; to check that $B_\sigma$ is bounded from  $L^{5,\rho}_{t,x} \times L^{2,\infty}L^\infty$  to  $L^{5,\rho}_{t,x}$, just notice that it is bounded from $L^p_tL^p_x \times L^{2,\infty}L^{\infty}$  to  $L^p_tL^p_x$ for $2<p<+\infty$ and conclude by interpolation.
 \end{enumerate}
 
 All those examples are embedded into larger classes of adapted spaces, namely the parabolic Morrey spaces 
  $\dot{ \mathcal{M}}_2^{p,5}$ where $2<p\leq 5$:
  $$\sup_{r>0, t\in\mathbb{R},x\in\mathbb{R}^3}   \frac 1 {r^{\frac 5 {p}- 1}}\left(\iint_{(t-r^2,t+r^2)\times B(x, r), s>0} \vert u(s,y)\vert^p\, dy\, ds\right)^{1/p}<+\infty.$$ 
  We have $L^p L^q\subset \dot{ \mathcal{M}}_2^{\min(p,q),5}$ [case a)], $L^{p,s}((0,+\infty), \dot M^{r,q}(\mathbb{R}^3))\subset  \dot{ \mathcal{M}}_2^{\sigma,5}$  with $2<\sigma<\min(p,r) $ [case b)], $ t^{\alpha} u\in L^{p,\rho}((0,+\infty), L^{q,\sigma}(\mathbb{R}^3)) \implies u\in  \dot{ \mathcal{M}}_2^{\delta,5}$  with $2<\delta<\min{ \frac{1}{1 +2\alpha p}p, q)} $ [case c)], $L^{\infty}L^{3,\infty} \subset \dot{ \mathcal{M}}_2^{r,5}$  for $2<r<3$ [case d)],  $L^{5,\rho}_{t,x} \subset \dot{ \mathcal{M}}_2^{r,5}$  for $2<r<5$ [case e)].
  We have the easy result on parabolic Morrey spaces:
  
  \begin{proposition}\label{propmorrey}$\ $  \\   For $2<p\leq 5$,  the parabolic Morrey spaces 
  $\dot{ \mathcal{M}}_2^{p,5}$  is an  adapted  Banach space and, for every    $\sigma\in \mathfrak{S}_1$, the bilinear operator $B_\sigma$  is a bounded bilinear operator from  $\dot{ \mathcal{M}}_2^{p,5}\times L^{2,\infty}L^\infty$  to  $\dot{ \mathcal{M}}_2^{p,5}$.  
Thus, there exists a positive constant $\epsilon_0$ such that, if $\| \vec u_0\|_{{\rm BMO}^{-1}}+\| \mathcal{L}(\mathbb{F})\|_{\dot{ \mathcal{M}}_2^{p,5}}<\epsilon_0$, then the Navier--Stokes problem (\ref{oseen}) has a global mild solution $\vec u\in\mathcal{Y}_{KT}+\dot{ \mathcal{M}}_2^{p,5}$.
 \end{proposition}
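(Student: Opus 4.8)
The plan is to reduce everything to the mapping properties of a single parabolic Riesz potential. The starting point is the pointwise bound already recorded in the proof of Theorem \ref{kocht}:
$$|B_\sigma(u,v)(t,x)|\le C_\sigma\int_0^t\!\int_{\mathbb R^3}\frac{|u(s,y)|\,|v(s,y)|}{(\sqrt{t-s}+|x-y|)^4}\,dy\,ds.$$
On the parabolic space $\mathbb R\times\mathbb R^3$, equipped with the quasi-distance $\rho((t,x),(s,y))=\sqrt{|t-s|}+|x-y|$ and the Lebesgue measure $ds\,dy$ (which is $5$-homogeneous, since $d(\lambda^2 s)\,d(\lambda y)=\lambda^5\,ds\,dy$), the kernel $\rho^{-4}=\rho^{1-5}$ is exactly the kernel of the Riesz potential $I_1$ of order $1$. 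Hence $|B_\sigma(u,v)|\le C_\sigma\, I_1(|u|\,|v|)$, and it suffices to control $I_1$ acting on the product.

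For the first two assertions (that $\dot{\mathcal M}_2^{p,5}$ is adapted, i.e. $B_\sigma$ is bounded on it) I would argue as follows. By the parabolic Hölder inequality (Cauchy--Schwarz on each parabolic cube $Q_r$) one gets, for $u,v\in\dot{\mathcal M}_2^{p,5}$,
$$\Big(\iint_{Q_r}|uv|^{p/2}\Big)^{2/p}\le\Big(\iint_{Q_r}|u|^{p}\Big)^{1/p}\Big(\iint_{Q_r}|v|^{p}\Big)^{1/p}\le r^{2(5/p-1)}\,\|u\|_{\dot{\mathcal M}_2^{p,5}}\|v\|_{\dot{\mathcal M}_2^{p,5}},$$
which is precisely the statement $\|uv\|_{\dot{\mathcal M}_2^{p/2,5/2}}\le\|u\|_{\dot{\mathcal M}_2^{p,5}}\|v\|_{\dot{\mathcal M}_2^{p,5}}$, where $\dot{\mathcal M}_2^{p/2,5/2}$ denotes the parabolic Morrey space at the scaling of $L^{5/2}$. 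Then I invoke the Adams--Olsen boundedness of the Riesz potential on Morrey spaces: $I_1$ maps $\dot{\mathcal M}_2^{p/2,5/2}$ into $\dot{\mathcal M}_2^{p,5}$, the exponents being dictated by the homogeneity $\frac15=\frac{2}{5}-\frac{1}{5}$ together with preservation of the integrability ratio $\frac{p/2}{5/2}=\frac p5$. The admissibility conditions $1<p/2$, $\ 1\cdot\frac52<5$ and $p/2\le 5/2$ hold exactly for $2<p\le5$. Combining this with the pointwise domination yields $\|B_\sigma(u,v)\|_{\dot{\mathcal M}_2^{p,5}}\le C_\sigma\|u\|_{\dot{\mathcal M}_2^{p,5}}\|v\|_{\dot{\mathcal M}_2^{p,5}}$, so $\dot{\mathcal M}_2^{p,5}$ is adapted.

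The mixed estimate $B_\sigma:\dot{\mathcal M}_2^{p,5}\times L^{2,\infty}L^\infty\to\dot{\mathcal M}_2^{p,5}$ is where the real work lies, and I expect it to be the main obstacle. Writing $\phi(s)=\|v(s,\cdot)\|_\infty\in L^{2,\infty}(0,\infty)$, the product only satisfies $|uv|\le\phi(s)\,|u(s,y)|$, and the naive attempt to place $\phi\,u$ in $\dot{\mathcal M}_2^{p/2,5/2}$ fails: the Morrey control of $u$ gives only $L^2$ integrability in time for the spatial averages $s\mapsto\big(\int_{B(x,r)}|u(s,\cdot)|^{p/2}\big)$, whereas $\phi^{p/2}\in L^{4/p,\infty}$ would require those averages to lie in the smaller Lorentz space $L^{(4/p)',1}$; since $L^{2,\infty}\not\subset L^p_{\rm loc}$ for $p\ge2$ there is a genuine endpoint loss. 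I would therefore not pass through the product space but instead mimic the Koch--Tataru decomposition of $w=B_\sigma(u,v)$ into the far term $w_1$, the frozen term $w_2$ and the commutator term $w_3$, measuring the output in the parabolic $L^p$ (Morrey) norm rather than in $\mathcal Y_2$. The $L^\infty_x$ regularity of $v$ lets one carry out the spatial part of $I_1$ first (gaining a factor $(t-s)^{-1/2}$), leaving a one--dimensional time convolution against $(t-s)^{-1/2}$ weighted by $\phi$; since both $(t-s)^{-1/2}$ and $\phi$ sit in $L^{2,\infty}$, the estimate is closed by the O'Neil convolution inequality in Lorentz spaces, together with the $L^pL^p$ maximal regularity of the heat semigroup (the Morrey analogue of the $L^2L^2$ maximal regularity used for $w_3$ in Theorem \ref{kocht}). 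The borderline $L^{2,\infty}$ bookkeeping and the maximal regularity in the Morrey scale are the delicate points.

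Finally, the conclusion is immediate. Once $\dot{\mathcal M}_2^{p,5}$ is known to be adapted and $B_\sigma$ is known to be bounded from $\dot{\mathcal M}_2^{p,5}\times L^{2,\infty}L^\infty$ to $\dot{\mathcal M}_2^{p,5}$, the space $\mathcal Y=\dot{\mathcal M}_2^{p,5}$ satisfies exactly the hypotheses of Proposition \ref{propclas}, which then furnishes the constant $\epsilon_0$ and the global mild solution $\vec u\in\mathcal Y_{KT}+\dot{\mathcal M}_2^{p,5}$ under the smallness assumption $\|\vec u_0\|_{{\rm BMO}^{-1}}+\|\mathcal L(\mathbb F)\|_{\dot{\mathcal M}_2^{p,5}}<\epsilon_0$.
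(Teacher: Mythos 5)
Your first and third steps are exactly the paper's own: adaptedness of $\dot{\mathcal{M}}_2^{p,5}$ is obtained there just as you do it (pointwise domination of $B_\sigma$ by the parabolic Riesz potential, H\"older giving $uv\in\dot{\mathcal{M}}_2^{p/2,5/2}$, then the Hedberg--Adams inequality, which needs $p/2>1$), and the conclusion is indeed a direct application of Proposition \ref{propclas}. The genuine gap is the mixed estimate $B_\sigma:\dot{\mathcal{M}}_2^{p,5}\times L^{2,\infty}L^\infty\to\dot{\mathcal{M}}_2^{p,5}$, which you correctly single out as the crux but do not prove: you end with a plan whose ``delicate points'' (maximal regularity in the Morrey scale, the borderline Lorentz bookkeeping) are left open, and the plan itself is misdirected. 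The Koch--Tataru splitting $w_1,w_2,w_3$ of Theorem \ref{kocht} (localization on $Q_{10T,x}$, frozen semigroup $e^{t\Delta}$, commutator killed by maximal regularity) is anchored at $t=0$, because the $\mathcal{Y}_2$ norm only involves cubes $Q_{T,x}=(0,T)\times B(x,\sqrt T)$. The Morrey norm of the output instead requires $\|\mathds{1}_{Q_r(t_0,x_0)}B_\sigma(u,v)\|_{L^pL^p}\lesssim r^{\frac{5}{p}-1}$ for \emph{every} parabolic cube $Q_r(t_0,x_0)=(t_0-r^2,t_0+r^2)\times B(x_0,r)$, in particular for $t_0\gg r^2$; nothing in your sketch explains how the frozen-semigroup/commutator structure is to be re-anchored at such cubes, and this is precisely where the work lies.

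Moreover, by refusing ``to pass through the product space'' you discard the tool that handles the far-field part of the potential, which is unavoidable. On a cube $Q_r(t_0,x_0)$, the contribution of points at parabolic distance $\sim 2^k r$, $k\geq 0$, cannot be treated by ``spatial convolution first'': that step needs $\|u(s,\cdot)\|_{L^p_x}$, which is infinite for typical elements of $\dot{\mathcal{M}}_2^{p,5}$ (for instance $u(s,y)=(\sqrt s+\vert y\vert)^{-1}$ belongs to $\dot{\mathcal{M}}_2^{p,5}$ for every $p<5$ but has $\|u(s,\cdot)\|_{L^p_x}=+\infty$ whenever $p\leq 3$). The paper's actual argument is a near/far splitting of the Riesz potential relative to each cube. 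For the far part it uses exactly the product bound you rejected: since $p>2$ one has $L^{2,\infty}L^\infty\subset\dot{\mathcal{M}}_2^{\frac{p}{p-1},5}$, hence $uv\in\dot{\mathcal{M}}_2^{1,5/2}$; the exponent $1$ is indeed below the Hedberg--Adams threshold (that is the true content of the ``endpoint loss'' you detected), but it suffices for the far part, where the kernel is $\lesssim(2^kr)^{-4}$ and $\iint_{Q_{2^kr}(t_0,x_0)}\vert uv\vert\lesssim(2^kr)^3\|uv\|_{\dot{\mathcal{M}}_2^{1,5/2}}$, summing to the pointwise bound $Cr^{-1}\|uv\|_{\dot{\mathcal{M}}_2^{1,5/2}}$ on $Q_r(t_0,x_0)$. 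For the near part the paper applies exactly your closing mechanism --- spatial convolution producing $(t-s)^{-1/2}$, then the O'Neil convolution inequality in Lorentz spaces (again using $p>2$) --- but to the localized function $\mathds{1}_{Q_{8r}(t_0,x_0)}u$, which lies in $L^pL^p$ with norm $\lesssim r^{\frac{5}{p}-1}$; the localization is what legitimizes the spatial step. So your final computation is sound only after the near/far localization you omitted, and no heat-semigroup decomposition or maximal regularity is needed anywhere.
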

 
 \begin{proof} The fact that $\dot{ \mathcal{M}}_2^{p,5}$  is an  adapted  Banach space is proved in \cite{PGL2, PGL3}. One writes that, for  $\sigma\in \mathfrak{S}_1$,
  $$      \vert B_\sigma(u,v))\vert\leq C_\sigma  \int_0^t \int \frac 1{(\sqrt{t-s}+\vert x-y\vert)^4} \vert u(s,y)\vert\, \vert v(s,y)\vert\, dy\, ds.
 $$    Thus, $B_\sigma(u,v)$ is controlled by the parabolic Riesz potential of $\vert uv\vert$;  as  $\vert uv\vert\in \dot{ \mathcal{M}}_2^{p/2,5/2}$ if $u$ and $v$ belong to $\dot{ \mathcal{M}}_2^{p,5}$, we conclude by Hedberg's inequality for Riesz potentals and Morrey spaces that $B_\sigma(u,v)$ is controlled in $\dot{ \mathcal{M}}_2^{p,5}$.
 
 Now, let us consider $u\in \dot{ \mathcal{M}}_2^{p,5}$ and $v\in L^{2,\infty}L^\infty$. Since $p>2$, we   have $v\in \dot M^{\frac p{p-1},5}_2$, hence $uv\in \dot  { \mathcal{M}}_2^{1,5/2}$.  For $r>0$, $t\in\mathbb{R}$ and $x\in\mathbb{R}^3$, we want to estimate the $L^p L^p$ norm of  $B_\sigma(u,v)$ on $Q_r(t,x)=(t-r^2,t+r^2)\times B(x,r)$. Let $\rho(t-s,x-y)=\sqrt{t-s}+\vert x-y\vert$ be the parabolic distance. Let $(s,y)\in Q_r(t,x)$ and $(\sigma,z)$ be such that $8\ 2^k r\leq \rho(t-\sigma, x-z)\leq 16\ 2^k r$ with $k\in\mathbb{N}$; then $$\frac 1{(\sqrt{\sigma-s}+\vert z-y\vert)^4} \leq C \frac 1{(2^kr)^4}$$ and thus
\begin{equation*}\begin{split} \iint_{8\ 2^k r\leq \rho(t-\sigma, x-z)\leq 16\ 2^k r}& \frac 1{(\sqrt{\sigma-s}+\vert z-y\vert)^4}  \vert u(\sigma,z) v(\sigma,z)\vert\, d\sigma\, dz\\ \leq & C  \frac 1{(2^kr)^4} \iint_{Q_{16\, 2^k r}(t,x)} \vert u(\sigma,z) v(\sigma,z)\vert\, d\sigma\, dz\\ \leq & C'  \frac 1{2^kr} \|uv\|_{ \dot  { \mathcal{M}}_2^{1,5/2}}
\end{split}\end{equation*} so that
\begin{equation*}\begin{split} \iint_{Q_r(t,x)}& \left\vert  \iint_{8  r\leq \rho(t-\sigma, x-z))} \frac 1{(\sqrt{\sigma-s}+\vert z-y\vert)^4}  \vert u(\sigma,z) v(\sigma,z)\vert\, d\sigma\, dz\right\vert^p\, ds\, dy\\ \leq & C \iint_{Q_r(t,x)}\left\vert \sum_{k=0}^{+\infty}  \frac 1{2^kr} \|uv\|_{ \dot  { \mathcal{M}}_2^{1,5/2}}\right\vert^p \, ds\, dy \\ \leq & C' r^{5-p} \|u\|_{ \dot{ \mathcal{M}}_2^{p,5}}^p \|v\|_{L^{2,\infty}L^\infty}^p.
\end{split}\end{equation*}
On the other hand, we have $$\mathds{1}_{Q_{8r}(t,x)} u\in L^p L^p$$ so that
$$w(s,y)=  \iint_{ \rho(t-\sigma, x-z)<8r} \frac 1{(\sqrt{\sigma-s}+\vert z-y\vert)^4}  \vert u(\sigma,z) v(\sigma,z)\vert$$ satisfies
$$ \|w\|_{L^p(dy)}\leq C \int \frac 1{\sqrt{\vert s-\sigma\vert}} \|\mathds{1}_{Q_{8r}(t,x)} u(s,z)\|_{L^p(dz)} \|v(s,z)\|_{L^\infty(dz)}\, ds $$ and 
$$ \|w\|_{L^pL^p}\leq C'  \|\mathds{1}_{Q_{8r}(t,x)} u\|_{L^p L^p} \|v\|_{L^{2,\infty}L^\infty}\leq C''  r^{1-\frac p 5}\|u\|_{\dot{ \mathcal{M}}_2^{p,5}} \|v\|_{L^{2,\infty}L^\infty}.$$
Thus, $B_\sigma(u,v)$ belongs to $\dot{ \mathcal{M}}_2^{p,5}$.
 \end{proof}
  
 However, for some adapted spaces $ \mathcal{Y}$, assumptions in Proposition \ref{propclas} are not satisfied and we cannot use Proposition \ref{propmorrey} as they are not included in $\dot{ \mathcal{M}}_2^{p,5}$ for any $p>2$. For instance, let us consider $\mathcal{Y}=L^{2}((0,+\infty), \mathrm{A}(\mathbb{R}^3))$ where $ \mathrm{A}$ is the inverse Fourier transform of $L^1$  (this corresponds to the endpoint $p=2$ of the Serrin class and to  the solutions of Lei and Lin \cite{LEI} in $L^2   \mathrm{A}$). Then, obviously, $L^2  \mathrm{A}$ is included in $\dot{ \mathcal{M}}_2^{2,5}$ but not in $\dot{ \mathcal{M}}_2^{p,5}$ for $p>2$. Moreover,  it is easy to check that, for $\sigma_0(\xi)=\vert\xi\vert$, $B_{\sigma_0}$ is not bounded from  $L^2  \mathrm{A} \times\ {L^{2,\infty}L^\infty}$  to  $L^2  \mathrm{A}$, and even from $L^2  \mathrm{A} \times\mathcal{Y}_{KT} $  to  $L^2  \mathrm{A}$ nor to $ \mathcal{Y}_{KT}$    (see  counter-examples in Proposition \ref{propy3} in the Appendix). 
 
 Another adapted space for which we don't know whether we may apply Proposition \ref{propclas} is the space of singular multipliers  $\mathcal{M}(\dot H^{1/2,1}_{t,x}\mapsto L^2_{t,x})$. This space has been introduced by Lemari\'e-Rieusset \cite{PGL2, PGL3} and independently by Dao and Nguyen \cite{DAO}. 
 Notice that, for $2<p\leq 5$, we have the embeddings
 $$ \dot{ \mathcal{M}}_2^{p,5}\subset \mathcal{M}(\dot H^{1/2,1}_{t,x}\mapsto L^2_{t,x}) \subset \dot{ \mathcal{M}}_2^{2,5}$$ so that $ \mathcal{M}(\dot H^{1/2,1}_{t,x}\mapsto L^2_{t,x})$ may be viewed as an endpoint of the scale of adapted spaces $\dot{ \mathcal{M}}_2^{p,5}$ with $p>2$. Remark that $L^2  \mathrm{A} $ is not included in $ \mathcal{M}(\dot H^{1/2,1}_{t,x}\mapsto L^2_{t,x})$  (see  a counter-example in Proposition \ref{propy4} in the Appendix).

 Thus, we need to find a new adapted space if we want to consider the Cauchy problem with an initial value in ${\rm BMO}^{-1}$ and a forcing term $\Div\mathbb{F}$ leading (in absence of initial value) to a solution in $L^2\mathrm{A}$ or in $\mathcal{M}(\dot H^{1/2,1}_{t,x}\mapsto L^2_{t,x})$. This will be done in the next section by modifying the space $ \mathcal{Y}_{KT}$ of Koch and Tataru.

 \section{A variation on the Koch and Tataru theorem}
  Recall that the Koch and Tataru space $ \mathcal{Y}_{KT}$ is defined as 
  $$   \mathcal{Y}_{KT}=\{u \in \mathcal{Y}_2\ /\ \sup_{t>0} \sqrt t \|u(t,.)\|_\infty<+\infty\}.$$ It has been designed to grant that,  for every $\sigma\in \mathfrak{S}_1$, the bilinear operator $B_\sigma$  is   a bounded bilinear operator from  $\mathcal{Y}_{KT}\times\mathcal{Y}_2$  to  $\mathcal{Y}_2$.  $B_\sigma$  is also a bounded bilinear operator from  $\mathcal{Y}_{KT}\times\mathcal{Y}_{KT}$  to  $\mathcal{Y}_{KT}$.
  
  Let us remark that we may easily check that $\mathcal{Y}_{KT}\subset \dot{ \mathcal{M}}_2^{2,5}\subset \mathcal{Y}_2$. We are going to describe new spaces $\mathcal{Y}_{KT,q}$ with $5<q<+\infty$ so that
  $$ \mathcal{Y}_{KT}\subset \mathcal{Y}_{KT,q} \subset\dot{ \mathcal{M}}_2^{2,5}$$ and, for  every $\sigma\in \mathfrak{S}_1$, the bilinear operator $B_\sigma$  is  a bounded bilinear operator from  $\mathcal{Y}_{KT,q}\times \dot{ \mathcal{M}}_2^{2,5}$ to  $\mathcal{Y}_{KT,q}$.  In particular,  $\mathcal{Y}_{KT,q}$  is an adapted space and, for every adapted space $\mathcal{Y}$ such that $\mathcal{Y}\subset\dot{ \mathcal{M}}_2^{2,5}$, $\mathcal{Y}_{KT,q}+\mathcal{Y}$ is an adapted space.
  
  Recall that $$Q_{T,x}=\{(t,y)\ /\ 0<t<T, \vert x-y\vert\leq \sqrt T\} $$ and define $$R_{T,x}=\{(t,y)\ /\ T/2<t<T, \vert x-y\vert\leq \sqrt T\}. $$  If $u\in  \mathcal{Y}_{KT}$, then $\mathds{1}_{Q_{T,x}} u\in L^2_{t,x}$ and $\|\mathds{1}_{Q_{T,x}} u\|_{L^2_{t,x}}\leq   \|u\|_{\mathcal{Y}_2} T^{3/4}$. Moreover, $\mathds{1}_{R_{T,x}} u\in L^\infty_{t,x}$ and $\|\mathds{1}_{R_{T,x}} u\|_{L^\infty_{t,x}}\leq   \|u\|_{\mathcal{Z}_0} \frac {\sqrt 2}{\sqrt T}$. Thus, we have, for $5\leq q\leq +\infty$, 
  $$  \|\mathds{1}_{R_{T,x}} u\|_{\dot {\mathcal{M}}^{\frac{2q}5,q}_2}\leq C  \|\mathds{1}_{R_{T,x}} u\|_{L^q_{t,x}} \leq C' \|u\|_{\mathcal{Y}_{KT}} T^{\frac 5 {2q}-\frac 1 2}.$$
  
  \begin{definition} $\ $\\ The modified Koch and Tataru space $\mathcal{Y}_{KT,q}$ for $5 < q<+\infty$ is defined as the space of functions $u$ on $(0,+\infty)\times\mathbb{R}^3$ such that
  $$ \sup_{T>0,x\in \mathbb{R}^3} T^{-3/4} \|\mathds{1}_{Q_{T,x}} u\|_{L^2_{t,x}}<+\infty$$
  and
    $$ \sup_{T>0,x\in \mathbb{R}^3}  T^{-\frac 5 {2q}+\frac 1 2}  \|\mathds{1}_{R_{T,x}} u\|_{\dot {\mathcal{M}}^{\frac{2q}5,q}_2}<+\infty$$
    where
    $$ \| f\|_{\dot {\mathcal{M}}^{\frac{2q}5,q}_2}= \sup_{r>0, t\in\mathbb{R}, x\in\mathbb{R}^3}  r^{- \frac {15}{2q}}   \left(\iint_{(t-r^2,t+r^2)\times B(x,r)} \vert f(s,y)\vert^{\frac{2q}5} \, ds\, dy\right)^{\frac 5{2q}}.$$
\end{definition}

We first remark that $\mathcal{Y}_{KT,q}\subset \dot{ \mathcal{M}}_2^{2,5}$: if we want to estimate the $L^2_{t,x}$ norm of $u$ on $(t-r^2,t+r^2)\times B(x,r)$, we may assume that $t\geq r^2$ (otherwise, we  control the norm of $u$ on $(t-r^2,t+r^2)\times B(x,r)$ by the norm of $u$ on   $(-r^2,r^2)\times B(x,r)$); if $r^2\leq t\leq 4r^2$, we have $ (t-r^2,t+r^2)\times B(x,r) \subset Q_{5r^2,x}$, so we have a control of the $L^2_{t,x}$ norm of $u$ on $(t-r^2,t+r^2)\times B(x,r)$  by $\|u\|_{\mathcal{Y}_2} r^{3/2}$; if $t>4 r^2$, we have $ (t-r^2,t+r^2)\times B(x,r) \subset R_{\frac 3 2 t,x}$, so we have a control of the $L^2_{t,x}$ norm of $u$ on $(t-r^2,t+r^2)\times B(x,r)$  by $r^{5(\frac 12-\frac 1 q)} \|\mathds{1}_{R_{\frac 3 2 t,x}} u\|_{\dot {\mathcal{M}}^{\frac{2q}5,q}_2}$, hence in $r^{3/2} \left(\frac {r^2}t\right)^{ \frac 1 2-\frac 5{2q} } t^{\frac 12-\frac 5{2 q}} \|\mathds{1}_{R_{\frac 3 2 t,x}} u\|_{\dot {\mathcal{M}}^{\frac{2q}5,q}_2}$.

We may now state our main result:

\begin{theorem}\label{maintheo}$\ $ Let $5<q<+\infty$; then:
\\ A) For    every $\sigma\in \mathfrak{S}_1$, the bilinear operator $B_\sigma$  is  a bounded bilinear operator from  $\mathcal{Y}_{KT,q}\times \dot{ \mathcal{M}}_2^{2,5}$ to  $\mathcal{Y}_{KT,q}$.
\\ B)   The following assertions are equivalent:
\begin{itemize}
\item[(i)] $e^{t\Delta}  u_0\in \mathcal{Y}_{2}$;
\item[(i)] $e^{t\Delta}  u_0\in \mathcal{Y}_{KT,q}$;
\item[(ii)]                 $ u_0\in {\rm BMO}^{-1}$.
\end{itemize} 
C)
 $ \mathcal{Y}_{KT,q}$  is an  adapted  Banach space.
Thus, there exists a positive constant $\epsilon_0$ such that, if $\|                                                                                                                       \vec u_0\|_{ {\rm BMO}^{-1}}+\| \mathcal{L}(\mathbb{F})\|_{\mathcal{Y}_{KT,q}}<\epsilon_0$, then the Navier--Stokes problem (\ref{oseen}) has a global mild solution $\vec u\in\mathcal{Y}_{KT,q}$.
\\ D) More generally, if  $\mathcal{Y}$ is an adapted space  such that $\mathcal{Y}\subset\dot{ \mathcal{M}}_2^{2,5}$, there exists  a positive constant $\epsilon_1$ such that, if  $\mathbb{F}=\mathbb{F}_1+\mathbb{F}_2$ and $\| \vec u_0\|_{ {\rm BMO}^{-1}}+\| \mathcal{L}(\mathbb{F}_1)\|_{\mathcal{Y}_{KT,q}}+ \| \mathcal{L}(\mathbb{F}_2)\|_{\mathcal{Y}}<\epsilon_1$, then the Navier--Stokes problem (\ref{oseen}) has a global mild solution $\vec u\in\mathcal{Y}_{KT,q}+\mathcal{Y}$.
\end{theorem}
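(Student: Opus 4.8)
I would treat part A as the analytic heart of the theorem and read off parts B, C and D from it together with the embeddings $\mathcal{Y}_{KT}\subset\mathcal{Y}_{KT,q}\subset\dot{\mathcal{M}}_2^{2,5}\subset\mathcal{Y}_2$ recorded above. For A, fix $u\in\mathcal{Y}_{KT,q}$, $v\in\dot{\mathcal{M}}_2^{2,5}$ and set $w=B_\sigma(u,v)$, working throughout from the pointwise bound
$$|w(t,y)|\leq C_\sigma\int_0^t\int\frac{|u(s,z)|\,|v(s,z)|}{(\sqrt{t-s}+|z-y|)^4}\,dz\,ds,$$
which exhibits $w$ as dominated by the parabolic Riesz potential of order $1$ of $|uv|$ in parabolic dimension $5$ (since $5-4=1$).

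The mechanism I would exploit is the one already used for Proposition \ref{propmorrey}, now with two unequal Morrey scales. On a recent-past slab $R_{\tau,x'}$ the datum $u$ is controlled in $\dot{\mathcal{M}}_2^{2q/5,q}$ (this is exactly the second condition defining $\mathcal{Y}_{KT,q}$, at scale $\tau$), while $v\in\dot{\mathcal{M}}_2^{2,5}$; the Morrey product rule (Hölder, with $\tfrac1{c}=\tfrac{5}{2q}+\tfrac12$ for the inner index and $\tfrac1\nu=\tfrac1q+\tfrac15$ for the homogeneity) gives $uv\in\dot{\mathcal{M}}_2^{2q/(q+5),\,5q/(q+5)}$ locally, with $1<\tfrac{2q}{q+5}<2$ precisely because $q>5$. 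Applying the Adams--Hedberg theorem for the order-$1$ parabolic Riesz potential, whose homogeneity gain is $\tfrac1{\nu^*}=\tfrac1\nu-\tfrac15=\tfrac1q$ and which preserves the ratio of the two indices, returns the potential into $\dot{\mathcal{M}}_2^{2q/5,q}$ --- exactly the space occurring in the definition of $\mathcal{Y}_{KT,q}$. This is what makes the scheme close, and it is the analytic replacement for the $L^\infty$ (i.e. $\mathcal{Z}_0$) control that Koch and Tataru used to defeat the obstruction term $w_3$.

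The genuine work, and the step I expect to be hardest, is turning this slab-wise statement into the two global bounds defining $\mathcal{Y}_{KT,q}$, with the correct homogeneities $T^{3/4}$ and $T^{5/(2q)-1/2}$. I would decompose the source $|uv|$ dyadically in time, $s\in(\tau/2,\tau)$ with $\tau=2^{-j}\cdot(\text{scale})$, cover each time-slab by translates of $R_{\tau,\cdot}$, estimate each piece by the product-rule-plus-Adams estimate above, and sum the resulting geometric series; the condition $q>5$ is exactly what guarantees convergence and the right power of $T$. For the $\dot{\mathcal{M}}_2^{2q/5,q}$-bound on $R_{T,x}$ this is essentially direct; for the $\mathcal{Y}_2$-bound I would recover the $L^2$-mass on $Q_{T,x}$ from the local $L^{2q/5}$-integrability ($2q/5>2$) supplied by the Morrey control, summing the contributions of the dyadic slabs that fill $Q_{T,x}$ --- this is where Koch and Tataru needed their separate treatment of $w_1,w_2,w_3$, and where the interplay between the two defining seminorms of $\mathcal{Y}_{KT,q}$ must be handled with care near $t=0$.

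Granting A, the remaining parts are formal. For B: (i$'$)$\Rightarrow$(i) is the embedding $\mathcal{Y}_{KT,q}\subset\mathcal{Y}_2$; (i)$\Leftrightarrow$(ii) is the Koch and Tataru characterization of $BMO^{-1}$; and (ii)$\Rightarrow$(i$'$) holds because $u_0\in BMO^{-1}$ gives $e^{t\Delta}u_0\in\mathcal{Y}_{KT}\subset\mathcal{Y}_{KT,q}$ by Theorem \ref{kocht}. For C, restricting the second argument in A to $\mathcal{Y}_{KT,q}\subset\dot{\mathcal{M}}_2^{2,5}$ shows $B_\sigma$ is bounded on $\mathcal{Y}_{KT,q}$, so $\mathcal{Y}_{KT,q}$ is adapted, and the existence claim follows from B and the abstract contraction proposition. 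For D, I would check that $\mathcal{Y}_{KT,q}+\mathcal{Y}$ is adapted by expanding $B_\sigma(u_1+u_2,v_1+v_2)$ into its four bilinear pieces: $B_\sigma(u_1,v_1)\in\mathcal{Y}_{KT,q}$ by C and $B_\sigma(u_2,v_2)\in\mathcal{Y}$ because $\mathcal{Y}$ is adapted, while the two mixed terms land in $\mathcal{Y}_{KT,q}$ by A combined with the symmetry $B_\sigma(a,b)=B_\sigma(b,a)$ and the embedding $\mathcal{Y}\subset\dot{\mathcal{M}}_2^{2,5}$. Writing $\mathcal{L}(\mathbb{F})=\mathcal{L}(\mathbb{F}_1)+\mathcal{L}(\mathbb{F}_2)$ and using $e^{t\Delta}u_0\in\mathcal{Y}_{KT,q}$, the abstract contraction proposition applied in the adapted space $\mathcal{Y}_{KT,q}+\mathcal{Y}$ then yields the global solution under the smallness hypothesis.
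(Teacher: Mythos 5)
Your treatment of the second seminorm of $\mathcal{Y}_{KT,q}$ (the $\dot{\mathcal{M}}_2^{2q/5,q}$ bound on $R_{T,x}$) and of parts B), C), D) does match the paper: the paper proves the $R_{T,x}$ estimate by exactly your mechanism (a far-field term bounded in $L^\infty$ by kernel decay, plus a near-field term handled by H\"older in Morrey norms and the Adams--Hedberg inequality, as in Proposition \ref{propmorrey}), and it treats B), C), D) as formal consequences of A). The genuine gap is in the other half of part A), the $\mathcal{Y}_2$-seminorm estimate $\|\mathds{1}_{Q_{T,x}}B_\sigma(u,v)\|_{L^2L^2}\lesssim T^{3/4}$, which you propose to get from the same Morrey--potential machinery by dyadic summation in time. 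That scheme cannot close. Quantitatively: the slab $\{s\sim\tau\}$ only carries the bound $\|\mathds{1}_{R_{\tau,x}}u\|_{\dot{\mathcal{M}}_2^{2q/5,q}}\lesssim\tau^{\frac{5}{2q}-\frac12}\|u\|_{\mathcal{Y}_{KT,q}}$, whose exponent is \emph{negative} when $q>5$; pushing each slab through H\"older-plus-Adams and then H\"older from $L^{2q/5}$ down to $L^2$ on $Q_{T,x}$ yields, for the slab $\tau=2^{-j}T$, a contribution of order $T^{3/4}$ at the very best (and of order $T^{3/4}\,2^{j(\frac12-\frac{5}{2q})}$ if one uses the global Morrey norm of the potential piece), so the sum over $j$ diverges --- the hypothesis $q>5$ makes this worse, not better, contrary to what you assert. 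The structural reason is that your Morrey--H\"older step throws away the only information that can make the sum converge: the $L^2$-orthogonality of the time slabs of $v$. Any bound that sees $v$ only through its $\dot{\mathcal{M}}_2^{2,5}$ norm is doomed at this endpoint; this is precisely the phenomenon behind Proposition \ref{propy2}, which shows that $B_{\sigma_0}$ is not bounded from $\dot{\mathcal{M}}_2^{2,5}\times\dot{\mathcal{M}}_2^{2,5}$ to $\mathcal{Y}_2$.

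What the paper actually does for this seminorm is to keep Koch and Tataru's three-fold splitting $w_1+w_2+w_3$ from Theorem \ref{kocht} (reusing their main lemma for $w_2$) and to replace only the estimate of the obstruction term $w_3$ by a new result, Theorem \ref{theo5}: $\bigl\| \int_0^t (e^{(t-s)\Delta}-e^{t\Delta})\sqrt{-\Delta}(uv)\,ds\bigr\|_{L^2L^2}\lesssim \|v\|_{L^2L^2}\,\sup_{T,x}T^{\frac12-\frac{5}{2q}}\|\mathds{1}_{R_{T,x}}u\|_{\dot{\mathcal{M}}_2^{2q/5,q}}$. Its proof supplies exactly the mechanism your sketch lacks: $v$ is split into dyadic-in-time pieces $v_j$ which are orthogonal in $L^2$; each interaction $U_j$ is estimated via a parabolic Fefferman--Phong inequality, $\|I_\beta(fg)\|_{L^2L^2}\lesssim\|f\|_{L^2L^2}\|g\|_{\dot{\mathcal{M}}_2^{p,5/\beta}}$, which is an $L^2\to L^2$ bound with a Morrey \emph{multiplier} and hence preserves the $L^2$ structure of $v_j$ (unlike H\"older in Morrey norms); each $U_j$ is estimated at two smoothness levels $L^2\dot H^{\pm\gamma}$ with $\gamma=1-\frac5q$, and the $L^2L^2$ norm of $\sum_j U_j$ is then controlled by almost-orthogonality, the cross terms decaying like $2^{-\gamma|j-k|}\|v_j\|_2\|v_k\|_2$. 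It is in this cross-term summation that $q>5$ (i.e.\ $\gamma>0$) is genuinely used. Without Theorem \ref{theo5}, or an equivalent maximal-regularity-plus-orthogonality argument, your part A) --- and with it the whole theorem --- remains unproved.
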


\begin{proof} We only need to prove point A), i.e. to estimate $B_\sigma(u,v)$ in $L^2_{t,x}(Q_{T,x})$ and in $\dot {\mathcal{M}}^{\frac{2q}5,q}_2(R_{T,x})$ for $u\in \mathcal{Y}_{KT,q}$ and $v\in \dot{ \mathcal{M}}_2^{2,5}$.

In order to estimate $B_\sigma(u,v)$ in $L^2_{t,x}(Q_{T,x})$, we follow  the proof of Theorem \ref{kocht} given by Koch and Tataru and we   split $w=B_\sigma(u,v)$ in three parts: 
\begin{itemize}
\item[$\bullet$] $w_1=B_\sigma(u,(1-\mathds{1}_{Q_{10 T,x}} )v)$ : we saw that $$\|  \mathds{1}_{Q_{T,x}}  w_1\|_{L^2L^2}\leq C T^{3/4} \|u\|_{Y_2} \|v\|_{Y_2}.$$
\item[$\bullet$] $w_2(t,y)= \sigma(D) \sqrt{-\Delta}e^{t\Delta}\int_0^t \mathds{1}_{Q_{10 T,x}} uv\, ds$.  We saw that $$\|  \mathds{1}_{Q_{T,x} } w_2\|_{L^2L^2}\leq  \|w_2\|_{L^2 L^2} \leq C T^{3/4} \|u\|_{Y_2} \|v\|_{Y_2}.$$
\item[$\bullet$] $w_3(t,y)= \sigma(D) \int_0^t  (e^{(t-s)\Delta}-e^{t\Delta}) \sqrt{-\Delta}(\mathds{1}_{Q_{10 T,x}} uv)\, ds$.  We are going to prove below (Theorem \ref{theo5} in next section) that, more generally, 
$$\| \int_0^t  (e^{(t-s)\Delta}-e^{t\Delta}) \sqrt{-\Delta}(  uv)\, ds\|_{L^2L^2}\leq C  \sup_{T>0,x\in\mathbb{R}^3} T^{\frac 1 2-\frac 5{2q}} \|\mathds{1}_{R_{T,x}}   u\|_{ \dot {\mathcal{M}}^{\frac{2q}5,q}_2} \|v\|_{L^2L^2}.$$
Thus,
$$\|  \mathds{1}_{Q_{T,x}}  w_3\|_{L^2L^2}\leq  \|w_3\|_{L^2 L^2} \leq C \|u\|_{Y_{KT,q}}\|\mathds{1}_{Q_{10 T,x}} v\|_{L^2L^2}\leq C T^{3/4} \|u\|_{Y_{KT,q}} \|v\|_{Y_2}.$$
\end{itemize}
Hence, $w\in Y_2$.

In order to estimate $B_\sigma(u,v)$ in $\dot {\mathcal{M}}^{\frac{2q}5,q}_2$,
 we write $w=w_4+w_5$ with $w_4=B_\sigma((1-\mathds{1}_{S_{T,x}} ) u,v)$ and $w_5=B_\sigma(\mathds{1}_{S_{ T,x}}u,  v)$, where $$S_{T,x}=\{(t,y)\ /\ T/4<t<T, \vert x-y\vert\leq \sqrt {10 T}\}. $$  
We easily  check that $ \mathds{1}_{R_{ T,x}} \vert w_4\vert\leq C \frac 1 {\sqrt T} \|uv\|_{\dot {\mathcal {M}}^{1,5/2}_2}$ (see the proof of Proposition \ref{propmorrey}) and thus $$\|  \mathds{1}_{R_{T,x}}  w_4\|_{\dot {\mathcal{M}}^{\frac{2q}5,q}_2}\leq C\|  \mathds{1}_{R_{T,x}}  w_4\|_{L^qL^q}\leq C' T^{-\frac 1 2+\frac 5{2q}} \|u\|_{ \dot{ \mathcal{M}}_2^{2,5}} \|v\|_{ \dot{ \mathcal{M}}_2^{2,5}}.$$
On the other hand, we have
  $$      \vert w_5(t,z)\vert\leq C_\sigma  \int_0^t \int \frac 1{(\sqrt{t-s}+\vert z-y\vert)^4} \mathds{1}_{S_{ T,x}}(s,y) \vert u(s,y)\vert\, \vert v(s,y)\vert\, dy\, ds.
 $$    Thus, $w_5$ is controlled by the parabolic Riesz potential of $\vert \mathds{1}_{S_{ T,x}}uv\vert$;  as  $\vert \mathds{1}_{S_{ T,x}}uv\vert\in \dot{ \mathcal{M}}_2^{\frac 2 5 \frac{5q}{5+q} ,\frac{5q}{5+q}}$ (since  $\mathds{1}_{S_{ T,x}}u\in \dot {\mathcal{M}}^{\frac{2q}5,q}_2$ and $v\in \dot{ \mathcal{M}}_2^{2,5}$), we conclude by Hedberg's inequality for Riesz potentals and Morrey spaces that $w_5$ is controlled in $ \dot {\mathcal{M}}^{\frac{2q}5,q}_2$:
 $$ \|w_5\|_{ \dot {\mathcal{M}}^{\frac{2q}5,q}_2}\leq C  \|\mathds{1}_{S_{ T,x}}u\|_{ \dot {\mathcal{M}}^{\frac{2q}5,q}_2} \|v\|_{ \dot{ \mathcal{M}}_2^{2,5}}\leq C' T^{\frac 5{2q}-\frac 1 2} \|u\|_{\mathcal{Y}_{KT,q}}\|v\|_{ \dot{ \mathcal{M}}_2^{2,5}}.\qedhere $$
\end{proof}

\begin{corollary}$\ $\\ a)  There exists  a positive constant $\epsilon_0$ such that, if  $\mathbb{F}=\mathbb{F}_1+\mathbb{F}_2$ and $\| \vec u_0\|_{ {\rm BMO}^{-1}}+\| \mathcal{L}(\mathbb{F}_1)\|_{\mathcal{Y}_{KT,q}}+ \| \mathcal{L}(\mathbb{F}_2)\|_{\mathcal{M}(\dot H^{1/2,1}_{t,x}\mapsto L^2_{t,x})}<\epsilon_0$, then the Navier--Stokes problem (\ref{oseen}) has a global mild solution $\vec u\in\mathcal{Y}_{KT,q}+\mathcal{M}(\dot H^{1/2,1}_{t,x}\mapsto L^2_{t,x})$.
\\ b) For $5<p<+\infty$, there  exists  a positive constant $\epsilon_p$ such that, if  $\mathbb{F}=\mathbb{F}_1+\mathbb{F}_2+\mathbb{F}_3$ and $$\| \vec u_0\|_{ {\rm BMO}^{-1}}+\| \mathcal{L}(\mathbb{F}_1)\|_{\mathcal{Y}_{KT,q}}+ \| \mathcal{L}(\mathbb{F}_2)\|_{\dot{ \mathcal{M}}_2^{p,5}}+  \| \mathcal{L}(\mathbb{F}_3)\|_{ L^2  \mathrm{A} }<\epsilon_p,$$  then the Navier--Stokes problem (\ref{oseen}) has a global mild solution $\vec u\in\mathcal{Y}_{KT,q}+\dot{ \mathcal{M}}_2^{p,5}+L^2  \mathrm{A} $.
\end{corollary}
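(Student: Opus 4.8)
The plan is to read both statements off Theorem \ref{maintheo}~D): in each case I would exhibit a single adapted space $\mathcal{Y}$ with $\mathcal{Y}\subset\dot{\mathcal{M}}_2^{2,5}$ that absorbs all the forcing contributions other than $\mathcal{L}(\mathbb{F}_1)$, collect those contributions into one force by linearity of $\mathcal{L}$, and then invoke D) with the constant $\epsilon_1$ it provides. The resulting solution automatically lands in $\mathcal{Y}_{KT,q}+\mathcal{Y}$, which is the claimed space in each part.

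For a) I would take $\mathcal{Y}=\mathcal{M}(\dot H^{1/2,1}_{t,x}\mapsto L^2_{t,x})$. This space is adapted, and the embedding chain $\dot{\mathcal{M}}_2^{p,5}\subset\mathcal{M}(\dot H^{1/2,1}_{t,x}\mapsto L^2_{t,x})\subset\dot{\mathcal{M}}_2^{2,5}$ recorded above shows it is contained in $\dot{\mathcal{M}}_2^{2,5}$. Hence Theorem \ref{maintheo}~D) applies directly with $\mathbb{F}_2$ as its second force, and $\epsilon_0:=\epsilon_1$ works; there is nothing further to check.

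For b) the difficulty is that there are now two forcing spaces, $\dot{\mathcal{M}}_2^{p,5}$ and $L^2\mathrm{A}$, which are genuinely incomparable (recall $L^2\mathrm{A}$ is included in $\dot{\mathcal{M}}_2^{2,5}$ but in no $\dot{\mathcal{M}}_2^{r,5}$ with $r>2$). The key step is therefore to prove that the sum $\mathcal{Y}:=\dot{\mathcal{M}}_2^{p,5}+L^2\mathrm{A}$, equipped with the norm $\|u\|_{\mathcal{Y}}=\inf_{u=u_2+u_3}(\|u_2\|_{\dot{\mathcal{M}}_2^{p,5}}+\|u_3\|_{L^2\mathrm{A}})$, is itself an adapted Banach space contained in $\dot{\mathcal{M}}_2^{2,5}$. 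Containment is immediate, since $\dot{\mathcal{M}}_2^{p,5}\subset\dot{\mathcal{M}}_2^{2,5}$ by Hölder (nesting of parabolic Morrey spaces) and $L^2\mathrm{A}\subset\dot{\mathcal{M}}_2^{2,5}$. Granting that $\mathcal{Y}$ is adapted, b) follows from D) applied to the single force $\mathbb{F}_2+\mathbb{F}_3$, using $\|\mathcal{L}(\mathbb{F}_2+\mathbb{F}_3)\|_{\mathcal{Y}}\leq\|\mathcal{L}(\mathbb{F}_2)\|_{\dot{\mathcal{M}}_2^{p,5}}+\|\mathcal{L}(\mathbb{F}_3)\|_{L^2\mathrm{A}}$, with $\epsilon_p:=\epsilon_1$.

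The adaptedness of $\mathcal{Y}$ is where the real work lies. For $u=u_2+u_3$ and $v=v_2+v_3$ I expand
$$B_\sigma(u,v)=B_\sigma(u_2,v_2)+B_\sigma(u_2,v_3)+B_\sigma(u_3,v_2)+B_\sigma(u_3,v_3).$$
The two diagonal terms are handled by the hypotheses on the summands: $B_\sigma(u_2,v_2)\in\dot{\mathcal{M}}_2^{p,5}$ by Proposition \ref{propmorrey}, and $B_\sigma(u_3,v_3)\in L^2\mathrm{A}$ because $L^2\mathrm{A}$ is adapted (Lei and Lin \cite{LEI}). The main obstacle is to route the two cross terms, since neither summand space contains the other. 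Here I would use that $\mathrm{A}\hookrightarrow L^\infty$ and $L^2_t\subset L^{2,\infty}_t$, whence $L^2\mathrm{A}\subset L^{2,\infty}L^\infty$; then the mixed bound of Proposition \ref{propmorrey}, namely $B_\sigma:\dot{\mathcal{M}}_2^{p,5}\times L^{2,\infty}L^\infty\to\dot{\mathcal{M}}_2^{p,5}$, together with the symmetry $B_\sigma(u,v)=B_\sigma(v,u)$, places both $B_\sigma(u_2,v_3)$ and $B_\sigma(u_3,v_2)$ in $\dot{\mathcal{M}}_2^{p,5}\subset\mathcal{Y}$. Summing the four estimates and passing to the infimum over decompositions of $u$ and $v$ yields $\|B_\sigma(u,v)\|_{\mathcal{Y}}\leq C_\sigma\|u\|_{\mathcal{Y}}\|v\|_{\mathcal{Y}}$, so $\mathcal{Y}$ is adapted and the proof of b) is complete.
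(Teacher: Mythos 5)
Your proof is correct and follows essentially the same route as the paper: part a) is a direct application of Theorem \ref{maintheo}~D) with $\mathcal{Y}=\mathcal{M}(\dot H^{1/2,1}_{t,x}\mapsto L^2_{t,x})$, and part b) rests on exactly the three facts the paper invokes --- adaptedness of $\dot{\mathcal{M}}_2^{p,5}$ and of $L^2\mathrm{A}$, both contained in $\dot{\mathcal{M}}_2^{2,5}$, plus the cross-term bound $B_\sigma:\dot{\mathcal{M}}_2^{p,5}\times L^{2,\infty}L^\infty\to\dot{\mathcal{M}}_2^{p,5}$ of Proposition \ref{propmorrey} applied via $L^2\mathrm{A}\subset L^{2,\infty}L^\infty$. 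Your only addition is to spell out the bookkeeping (the four-term expansion of $B_\sigma$ and the infimum norm on the sum space $\dot{\mathcal{M}}_2^{p,5}+L^2\mathrm{A}$) that the paper leaves implicit.
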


\begin{proof} a) is direct consequence of Theorem \ref{maintheo}, as $\mathcal{M}(\dot H^{1/2,1}_{t,x}\mapsto L^2_{t,x})$ is an adapted Banach space contained in $\dot{ \mathcal{M}}_2^{2,5}$ \cite{PGL2, PGL3, DAO}. Similarly, $\dot{ \mathcal{M}}_2^{p,5}$ is an adapted  Banach space contained in $\dot{ \mathcal{M}}_2^{2,5}$  \cite{PGL2}, $L^2  \mathrm{A}$  is an adapted  Banach space contained in $\dot{ \mathcal{M}}_2^{2,5}$ \cite{LEI}, and,  and, for every    $\sigma\in \mathfrak{S}_1$, the bilinear operator $B_\sigma$  is a bounded bilinear operator from  $\dot{ \mathcal{M}}_2^{p,5}\times L^2  \mathrm{A}$  to  $\dot{ \mathcal{M}}_2^{p,5}$, by Proposition \ref{propmorrey} since $L^2  \mathrm{A}\subset L^{2,\infty}L^\infty$.
\end{proof}

   \section{Parabolic dyadic decomposition of the time-space domain}

 We decompose $(0,+\infty)\times \mathbb{R}^3$ as 
 $$ (0,+\infty)\times \mathbb{R}^3=\bigcup_{j\in\mathbb{Z}, k\in\mathbb{Z}^3} \{(t,x)\ / \ 1\leq 4^jt<4, 2^jx-k\in [0,1)^3\}= \bigcup_{j\in\mathbb{Z}, k\in\mathbb{Z}^3} R_{j,k} $$
 and $(0, 16\, 4^{-j})\times\mathbb{R}^3$ as
 $$(0, 16\, 4^{-j})\times\mathbb{R}^3= \bigcup_ {k\in\mathbb{Z}^3}  \{(t,x)\ / 0< 4^jt<16, 2^jx-k\in [0,1)^3\}= \bigcup_{k\in\mathbb{Z}^3} Q_{j,k} .$$
 If $v\in L^2 L^2$, then $v$ can be decomposed in an orthogonal series $$v=\sum_{j\in\mathbb{Z}, k\in\mathbb{Z}^3} \mathds{1}_{R_{j,k}} v=\sum_{j\in\mathbb{Z}, k\in\mathbb{Z}^3} v_{j,k} $$ with $$\|v\|_{L^2 L^2}^2=\sum_{j\in\mathbb{Z}, k\in\mathbb{Z}^3} \|v_{j,k}\|_{L^2 L^2}^2.$$
 Similarly, if $u\in  {Y}_{KT,q}$, then 
 $$u=\sum_{j\in\mathbb{Z}, k\in\mathbb{Z}^3} \mathds{1}_{R_{j,k}} u	=\sum_{j\in\mathbb{Z}, k\in\mathbb{Z}^3} u_{j,k} $$ with, for every $5/2\leq \rho\leq q$,  $$ \sup_{j\in\mathbb{Z}, k\in\mathbb{Z}^3} 2^{j(1-\frac 5 \rho)} \|u_{j,k}\|_{ \dot {\mathcal{ M}}_2^{\frac 2 5 \rho,\rho}}<+\infty.$$
 
 \begin{theorem}\label{theo5}$\ $\\ Let $v\in L^2 L^2$ and $u\in Y_{KT,q}$ with $5<q<+\infty$. Write $v_{j,k} =\mathds{1}_{R_{j,k}} v$, $v_j=\sum_{k\in\mathbb{Z}^3} v_{j,k}$, and $u_{j,k} =\mathds{1}_{R_{j,k}} u$. Then
 \\ A) For $2<r\leq\rho\leq q$  (with $r\leq\frac 2 5 q$) and $\alpha= 1-\frac 5 \rho$, 
 $$\| \int_0^t  (e^{(t-s)\Delta}-e^{t\Delta}) \sqrt{-\Delta}^{1+\alpha}(  uv_j)\, ds\|_{L^2L^2}\leq C  \|v_j\|_2 \sup_{k\in\mathbb{Z}^3}   \| u_{j,k}\|_{ \dot {\mathcal{ M}}_2^{r,\rho}}.$$
 \\ B) We have $$\| \int_0^t  (e^{(t-s)\Delta}-e^{t\Delta}) \sqrt{-\Delta}(  uv)\, ds\|_{L^2L^2}\leq C   \|v\|_{L^2L^2}\sup_{T>0,x\in\mathbb{R}^3} T^{\frac 1 2-\frac 5{2q}} \|\mathds{1}_{R_{T,x}}   u\|_{ \dot {\mathcal{ M}}_2^{\frac 2 5 q,q}}.$$
 \end{theorem}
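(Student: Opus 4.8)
The plan is to prove A) as a genuine space--time estimate on the difference operator, and then to deduce B) by assembling these estimates across the dyadic time--scales $j$. Note first that the maximal-regularity shortcut used for the term $w_3$ in Theorem \ref{kocht} is not available here: there $u\in\mathcal{Z}_0$ was essentially an $L^\infty$ multiplier, whereas now $u$ is only controlled in a parabolic Morrey norm, so fixed-time estimates are hopeless and one must keep the time integration. In both parts the mechanism is almost orthogonality over the parabolic cubes $R_{j,k}$: I use the disjointness of the supports of the pieces $u_{j,k}v_{j,k}$, the orthogonality $\|v_j\|_2^2=\sum_k\|v_{j,k}\|_2^2$ (resp. $\|v\|_{L^2L^2}^2=\sum_{j,k}\|v_{j,k}\|_2^2$), and the uniform Morrey control $\sup_k\|u_{j,k}\|_{\dot{\mathcal M}_2^{\frac25\rho,\rho}}$, which plays the role of an operator norm for multiplication by $u$.

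For A) I would first use the parabolic scaling (which leaves both sides of the inequality invariant) to reduce to $j=0$, so that $v_0$ lives in the slab $\{1\le t<4\}$ over the unit lattice cubes. The operator kernel is the difference $\Phi_{t-s}(x-y)-\Phi_{t}(x-y)$, where $\Phi_\tau$ is the kernel of $\sqrt{-\Delta}^{\,1+\alpha}e^{\tau\Delta}$, with $|\Phi_\tau(z)|\le C(\sqrt\tau+|z|)^{-(4+\alpha)}$. The whole point of the difference is the cancellation for $s\ll t$: writing it as $-\int_{t-s}^{t}\partial_\tau\Phi_\tau(x-y)\,d\tau$ yields the gain $\lesssim (s/t)\,(\sqrt t+|x-y|)^{-(4+\alpha)}$ on the region $s<t/2$, whereas for $s>t/2$ the kernel is merely dominated by the parabolic Riesz kernel of order $1-\alpha$. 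I would then test $\int_0^t(e^{(t-s)\Delta}-e^{t\Delta})\sqrt{-\Delta}^{\,1+\alpha}(uv_0)\,ds$ against $h\in L^2_{t,x}$, localise the pairing to the cubes carrying $u_{0,k}v_{0,k}$, bound each local term by $\|v_{0,k}\|_2$ times the Morrey norm of $u_{0,k}$ times an appropriate norm of the potential of $h$, and sum in $k$ by Cauchy--Schwarz using the spatial decay of the kernel. The contribution of $s<t/2$ is comfortably summable thanks to the factor $s/t$; the delicate contribution is $s>t/2$.

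The main obstacle is precisely this near--diagonal part, where the operator is a true parabolic Riesz potential of order $1-\alpha$. Distributing the integrability naively --- $u_{0,k}$ in its Morrey exponent $L^{2\rho/5}$ and $v_{0,k}$ in $L^2$ --- leaves a deficit with respect to the smoothing order, so Hölder alone does not close the estimate. One has to use the \emph{full} Morrey norm of $u$, i.e. the supremum over all sub-scales, through a Hedberg-type inequality for the parabolic Riesz potential, combined with the $L^2$ mass furnished by $v$ and the spatial decay of the kernel for the lattice sum. The regime $\rho\downarrow 5/2$ (equivalently $\alpha\downarrow -1$), where the kernel decay is weakest, is the tightest and accounts for the restriction $\rho>5/2$.

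Finally, B) follows by applying A) with $\rho=5$ (so that $\alpha=0$ and $\sqrt{-\Delta}^{\,1+\alpha}=\sqrt{-\Delta}$) on each slab $j$ and summing over $j$. Because the difference kernel concentrates the contribution of slab $j$ near the time $t\sim 4^{-j}$, the outputs for different $j$ are essentially orthogonal, which upgrades the triangle inequality to an $\ell^2$ summation and reproduces $\|v\|_{L^2L^2}^2=\sum_j\|v_j\|_2^2$; this yields the bound $C\,\|v\|_{L^2L^2}\,\sup_{j,k}\|u_{j,k}\|_{\dot{\mathcal M}_2^{2,5}}$. It then remains to compare norms: for each cube $R_{j,k}$ a local Hölder inequality between the Morrey scales $\dot{\mathcal M}_2^{2,5}$ and $\dot{\mathcal M}_2^{\frac25q,q}$ (using $q>5$, so the higher $q$--integrability dominates on the bounded cube) gives $\|u_{j,k}\|_{\dot{\mathcal M}_2^{2,5}}\le C\,T^{\frac12-\frac5{2q}}\|\mathds{1}_{R_{j,k}}u\|_{\dot{\mathcal M}_2^{\frac25q,q}}$ with $T\sim 4^{-j}$, hence $\sup_{j,k}\|u_{j,k}\|_{\dot{\mathcal M}_2^{2,5}}\le C\sup_{T,x}T^{\frac12-\frac5{2q}}\|\mathds{1}_{R_{T,x}}u\|_{\dot{\mathcal M}_2^{\frac25q,q}}$, which is exactly the right-hand side of B).
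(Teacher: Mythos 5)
Your sketch of part A) is essentially the paper's own argument: domination of the difference operator by a parabolic Riesz potential, a bilinear Hedberg/Fefferman--Phong-type estimate (one factor in $L^2L^2$, the other in a parabolic Morrey space), and a lattice summation exploiting the spatial decay of the kernel; with care this can be completed. The genuine gap is in part B). You propose to use A) only in the single case $\rho=5$, $\alpha=0$, and to sum over $j$ on the grounds that ``the difference kernel concentrates the contribution of slab $j$ near the time $t\sim 4^{-j}$'', so that the outputs $U_j$ are essentially orthogonal in time. This is not true in any usable quantitative sense. For $t$ beyond the support of $v_j$ one has exactly
\begin{equation*}
U_j(t,\cdot)=\sqrt{-\Delta}\,e^{(t-16\cdot 4^{-j})\Delta}W_j^*,\qquad W_j^*\in L^2(\mathbb{R}^3),
\end{equation*}
and the $L^2_x$ norm of this tail decays only like $(t-16\cdot 4^{-j})^{-1/2}$, which is borderline: by Plancherel, its square integral over $(\tau_0,+\infty)$ equals $\tfrac12\|e^{(\tau_0-16\cdot 4^{-j})\Delta}W_j^*\|_2^2$, which is \emph{not} small for $\tau_0\sim 4^{-k}\gg 4^{-j}$ unless one knows that $\widehat{W_j^*}$ is concentrated at frequencies $|\xi|\gtrsim 2^j$ --- and that is precisely what would have to be proved. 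Hence the cross terms $\sum_{k<j}\int\langle U_j,U_k\rangle\,dt$ cannot be discarded, and the triangle inequality alone gives $\sum_j\|v_j\|_{L^2L^2}$, which is not controlled by $\|v\|_{L^2L^2}$.

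The almost orthogonality that actually closes the argument is in regularity (frequency), not in time, and it is exactly what the general-$\rho$ statement of A) --- which your plan never uses, a red flag in itself --- is designed to provide. Let $\gamma=1-\frac5q>0$ and $M=\sup_{T>0,x}T^{\frac12-\frac5{2q}}\|\mathds{1}_{R_{T,x}}u\|_{\dot{\mathcal M}_2^{\frac25q,q}}$. Applying A) with $\rho=q$ (so $\alpha=\gamma$) gives $\|U_j\|_{L^2\dot H^{\gamma}}\leq C\,2^{j\gamma}\|v_j\|_2\,M$, while applying it with $\frac1\rho=\frac25-\frac1q$ (so $\alpha=-\gamma$; here the hypotheses $5/2<\rho\le q$, i.e. $q>5$, are what make both choices admissible) gives $\|U_j\|_{L^2\dot H^{-\gamma}}\leq C\,2^{-j\gamma}\|v_j\|_2\,M$; in both cases your local H\"older comparison between the Morrey scales (which is correct) is what converts $\sup_k\|u_{j,k}\|$ into $2^{\pm j\gamma}M$. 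The cross terms are then estimated by the $\dot H^{-\gamma}$--$\dot H^{\gamma}$ duality,
\begin{equation*}
\Bigl|\int_0^{+\infty}\langle U_j(t,\cdot),U_k(t,\cdot)\rangle\,dt\Bigr|\leq \|U_j\|_{L^2\dot H^{-\gamma}}\|U_k\|_{L^2\dot H^{\gamma}}\leq C\,2^{-\gamma(j-k)}\|v_j\|_2\|v_k\|_2\,M^2\quad(k<j),
\end{equation*}
and the geometric factor $2^{-\gamma(j-k)}$ makes the double sum close by Schur's test, yielding $\|U\|_{L^2L^2}\leq C\,M\,\|v\|_{L^2L^2}$. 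Without the two-sided estimates --- that is, without A) for $\alpha$ both positive and negative --- there is no decay in $|j-k|$ and the summation over $j$ does not converge.
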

 
 \begin{proof} $\ $\\
 
 \noindent {\bf Proof of A).}\\We first consider $4^j t\leq 16$  and estimate $W= \int_0^t  (e^{(t-s)\Delta}-e^{t\Delta}) \sqrt{-\Delta}^{1+\alpha}(  uv_j)\, ds$ in $L^2((0, 16\, 4^{-j}),L^2)$, then estimate $W^*=  \int_0^{16\, 4^{-j}}  (e^{(16\, 4^{-j}-s)\Delta}-e^{16\, 4^{-j}\Delta})                                                                                                                                                                                                                   \sqrt{-\Delta}^{\alpha}(  uv_j)\, ds$  in $L^2(\mathbb{R}^3)$ and finally we estimate $W= \int_0^t  (e^{(t-s)\Delta}-e^{t\Delta}) \sqrt{-\Delta}^{1+\alpha}(  uv_j)\, ds$ in $L^2((16 \, 4^{-j},+\infty), L^2)$.
 
 When $t<16\, 4^{-j}$, we write
 $$ W=\sum_{l\in\mathbb{Z}^3} \mathds{1}_{Q_{j,l}}  \int_0^t  (e^{(t-s)\Delta}-e^{t\Delta}) \sqrt{-\Delta}^{1+\alpha}(\sum_{k\in\mathbb{Z}^3}  u_{j,k}v_{j,k})\, ds$$ which we reorganize as
 $$ W=\sum_{m\in\mathbb{Z}^3}  \sum_{k\in\mathbb{Z}^3} \mathds{1}_{Q_{j,k+m}}  \int_0^t  (e^{(t-s)\Delta}-e^{t\Delta}) \sqrt{-\Delta}^{1+\alpha} (u_{j,k}v_{j,k})\, ds=\sum_{m\in\mathbb{Z}^3} W_m.$$ We have
 \begin{equation*}\begin{split}
 \|W\|_{L^2((0,16\, 4^{-j}),L^2} \leq & \sum_{m\in\mathbb{Z}^3}   \|W_m\|_{L^2((0,16\, 4^{-j}),L^2} \\=\sum_{m\in\mathbb{Z}^3}  \left(  \sum_{k\in\mathbb{Z}^3} \right.&\left. \| \mathds{1}_{Q_{j,k+m}}  \int_0^t  (e^{(t-s)\Delta}-e^{t\Delta}) \sqrt{-\Delta}^{1+\alpha} (u_{j,k}v_{j,k})\, ds\|_{L^2((0,16\, 4^{-j}),L^2}^2\right)^{1/2}
 \end{split}\end{equation*}
 We have
 \begin{equation*}\begin{split} \vert \int_0^t  &(e^{(t-s)\Delta}-e^{t\Delta}) \sqrt{-\Delta}^{1+\alpha} (u_{j,k}v_{j,k})\, ds\vert \leq C Z_\alpha(u_{j,k} v_{j,k}). \end{split}\end{equation*} where
 $$ Z_\alpha(w)= \int_0^t\int \frac 1{(\sqrt{t-s}+\vert x-y\vert)^{4+\alpha}} \vert w(s,y)\vert\, dy\, ds.$$
 $Z_\alpha$ is a parabolic Riesz potential and we have the following equivalent of the Fefferman-Phong inequality \cite{FEF} for the parabolic Riesz potentials and the parabolic Morrey spaces \cite{PGL2}: if $0<\beta<\frac 52$ and $2<p< \frac 5 \beta$, then
 $$ \|\int_0^t \int \frac 1{(\sqrt{t-s}+\vert x-y\vert)^{5-\beta}} \vert f(s,y) g(s,y)\vert\, dy\, ds\|_{L^2L^2}\leq C_{p,\beta} \|f\|_{L^2L^2} \|g\|_{\dot{\mathcal{M}}_2^{p, \frac 5 \beta}}.
 $$
 As $4+\alpha=5-\frac 5 \rho$, we get
 $$\|  Z_\alpha(u_{j,k}v_{j,k})\|_{L^2_{t,x}}\leq C    \|u_{j,k}\|_{{ \dot {\mathcal{ M}}_2^{r,\rho}}} \|v_{j,k}\|_{L^2L^2}.$$
 Thus, 
 $$ \|W_m\|_{L^2 L^2}\leq C \sup_{k\in\mathbb{Z}^3} \|u_{j,k}\|_{{ \dot {\mathcal{ M}}_2^{r,\rho}}} \|v_j\|_{L^2L^2}.$$ Moreover, if $\vert m\vert\geq 20$ and $\vert m_0\vert=10$, we have, for $0\leq s\leq t \leq 16 \, 4^{-j}$, $y\in Q_{j,k}$, $x\in Q_{j,k+m}$ and $z\in Q_{j,k+m_0}$,
  \begin{equation*}\begin{split} \frac 1{(\sqrt{t-s}+\vert x-y\vert)^{4+\alpha}} \leq& \frac 1{ \vert x-y\vert^{4+\alpha}} \leq  C \frac{2^{(4+\alpha)j}}{m^{4+\alpha}}\leq C' \frac 1{m^{4+\alpha}  (\sqrt{t-s}+\vert z-y\vert)^{4+\alpha}}
  \end{split}\end{equation*} so that, for $0< t\leq 16\, 4^{-j}$, 
    \begin{equation*}\begin{split}
    \mathds{1}_{Q_{j,k+m}}(x)  Z_\alpha(u_{j,k}v_{j,k})&(t,x)\\\leq \frac C{m^{4+\alpha}}& \mathds{1}_{Q_{j,k+m_0}}(x-(m-m_0)2^{-j})  Z_\alpha(u_{j,k}v_{j,k})(t,x-(m-m_0)2^{-j}))
      \end{split}\end{equation*} and  $$ \|W_m\|_{L^2((0,16\, 4^{-j}), L^2)}\leq C \frac 1{m^{4+\alpha}} \sup_{k\in\mathbb{Z}^3} \|u_{j,k}\|_{{ \dot {\mathcal{ M}}_2^{r,\rho}}} \|v_j\|_{L^2L^2}.$$ Thus, we have proved that
      $$ \|W\|_{L^2((0,16\, 4^{-j}), L^2)}\leq C  \sup_{k\in\mathbb{Z}^3} \|u_{j,k}\|_{{ \dot {\mathcal{ M}}_2^{r,\rho}}} \|v_j\|_{L^2L^2}.$$ 
      
      We now estimate  $W^*=  \int_0^{16\, 4^{-j}}  (e^{(16\, 4^{-j}-s)\Delta}-e^{16\, 4^{-j}\Delta})                                                                                                                                                                                                                   \sqrt{-\Delta}^{\alpha}(  uv_j)\, ds$. First, as $v_j$ is supported in $4^{-j}<t<4 \ 4^{-j}$, we write
       \begin{equation*}\begin{split} W^*=&  \int_{4^{-j}}^{4\, 4^{-j}}  (e^{(16\, 4^{-j}-s)\Delta}-e^{16\, 4^{-j}\Delta})                                                                                                                                                                                                                   \sqrt{-\Delta}^{\alpha}(  uv_j)\, ds
       \\ =&   \int_{4^{-j}}^{4\, 4^{-j}} \int_0^s  (e^{(16\, 4^{-j}-s+\theta)\Delta} )                                                                                                                                                                                                                   \sqrt{-\Delta}^{2+\alpha}(  uv_j)\, d\theta\, ds
         \end{split}\end{equation*} so that
          \begin{equation*}\begin{split}  \vert W^*(x)\vert\leq& C\int_{4^{-j}}^{4\, 4^{-j}} \int_0^s\int  \frac 1{(\sqrt{16 \, 4^{-j}-s+\theta}+\vert x-y\vert)^{5+\alpha}}                                                                                                                                                                                                                  \vert   u(s,y)v_j(s,y)\vert\, dy\, d\theta\, ds
       \\ \leq& C'\int_{4^{-j}}^{4\, 4^{-j}} \int  \frac{4^{-j}}{(2^{-j}+\vert x-y\vert)^{5+\alpha}} \vert   u(s,y)v_j(s,y)\vert\, dy\, ds.
         \end{split}\end{equation*} 
         If $12\, 4^{-j}\leq \tau\leq 16\, 4^{-j}$, we have
         \begin{equation*}\begin{split} \int_{4^{-j}}^{4\, 4^{-j}} \int  &\frac{4^{-j}}{(2^{-j}+\vert x-y\vert)^{5+\alpha}} \vert   u(s,y)v_j(s,y)\vert\, dy\, ds
      \\   \leq  & C \int_0^\tau \int     \frac {2^{-j}}{(\sqrt{\tau-s}+\vert x-y\vert)^{4+\alpha}} \vert  u(s,y)v_j(s,y)\vert\, dy\, ds = C 2^{-j} Z_\alpha(\tau,x).      \end{split}\end{equation*} 
      Thus,
      $$ \|W^*(x)\|_2\leq C 2^{-j} \frac 1{4 \, 4^{-j}}\int_{12\, 4^{-j}}^{16 4^{-j}} \|Z_\alpha(\tau,.)\|_2\, d\tau\leq \frac C 2 \|Z_\alpha\|_{L^2((12\, 4^{-j},16\, 4^{-j}), L^2)}.$$ This gives
      $$ \|W^*\|_2\leq C  \sup_{k\in\mathbb{Z}^3} \|u_{j,k}\|_{{ \dot {\mathcal{ M}}_2^{r,\rho}}} \|v_j\|_{L^2L^2}.$$ 
      
      Finally, we   estimate $W= \int_0^t  (e^{(t-s)\Delta}-e^{t\Delta}) \sqrt{-\Delta}^{1+\alpha}(  uv_j)\, ds$ in $L^2((16 \, 4^{-j},+\infty), L^2)$. For $t> 16 \, 4^{-j}$, we have
      $$W= \int_0^{16\, 4^{-j}} (e^{(t-s)\Delta}-e^{t\Delta}) \sqrt{-\Delta}^{1+\alpha}(  uv_j)\, ds= \sqrt{-\Delta}e^{(t-16\, 4^{-j})\Delta} W^*$$
and thus
            $$ \|W\|_{L^2( (16 \, 4^{-j},+\infty), L^2)}\leq \frac 1{\sqrt 2}\|W^*\|_2\leq  C  \sup_{k\in\mathbb{Z}^3} \|u_{j,k}\|_{ \dot {\mathcal{ M}}_2^{r,\rho}} \|v_j\|_{L^2L^2}.$$ 

 $\ $\\
 
 \noindent {\bf Proof of B).}\\ Let $U= \int_0^t  (e^{(t-s)\Delta}-e^{t\Delta}) \sqrt{-\Delta}(  uv)\, ds$, $U=\sum_{j\in\mathbb{Z}} U_j$ with $$U_j= \int_0^t  (e^{(t-s)\Delta}-e^{t\Delta}) \sqrt{-\Delta}(  uv_j)\, ds.$$  Let $\gamma= 1-\frac 5 q$ and $\frac 1 \rho= \frac 2 5-\frac 1 q$. Then $\frac 5 2<\rho<5$ and $1-\frac 5 \rho=-\gamma$.  Let $2<r<\min(\frac{2q}5,\rho)$. From point A), we know that
 $$ \| U_j\|_{L^2 \dot H^{\gamma}}\leq    C  \|v_j\|_2 \sup_{k\in\mathbb{Z}^3}   \| u_{j,k}\|_{ \dot {\mathcal{ M}}_2^{r,q}} \leq C' 2^{j\gamma} \|v_j\|_2\sup_{T>0,x\in\mathbb{R}^3} T^{\frac 1 2-\frac 5{2q}} \|\mathds{1}_{R_{T,x}}   u\|_{ \dot {\mathcal{ M}}_2^{\frac 2 5 q,q}} .$$ and $$ \| U_j\|_{L^2 \dot H^{-\gamma}}\leq    C  \|v_j\|_2 \sup_{k\in\mathbb{Z}^3}   \| u_{j,k}\|_{ \dot {\mathcal{ M}}_2^{r,\rho}} \leq C' 2^{-j\gamma} \|v_j\|_2 \sup_{T>0,x\in\mathbb{R}^3} T^{\frac 1 2-\frac 5{2q}} \|\mathds{1}_{R_{T,x}}   u\|_{ \dot {\mathcal{ M}}_2^{\frac 2 5 q,q}} .$$ 
 We then have
      \begin{equation*}\begin{split} \int_0^{+\infty}\int \vert U(t,x)\vert^2\, dx\, dt=&\sum_{j\in\mathbb{Z}}   \int_0^{+\infty}\int \vert U_j(t,x)\vert^2\, dx\, dt\\& +2 \sum_{j\in\mathbb{Z}} \sum_{k\in\mathbb{Z},  k< j}  \int_0^{+\infty}     \langle (-\Delta)^{-\gamma} U_j(t,.) \vert (-\Delta)^\gamma U_k(t,.) \rangle \, dt
      \\ \leq    C  (\sup_{T>0,x\in\mathbb{R}^3}   T^{\frac 1 2-\frac 5{2q}} \|\mathds{1}_{R_{T,x}}   u\|_{ \dot {\mathcal{ M}}_2^{\frac 2 5 q,q}})^2
         &(\sum_{j\in\mathbb{Z}}  \|v_j\|_{L^2L^2}^2    +2  \sum_{j\in\mathbb{Z}} \sum_{k\in\mathbb{Z},  k< j} 2^{-\gamma(j-k)} \|v_j\|_{L^2 L^2}\|v_k\|_{L^2 L^2} )\\ \leq& C'  (\sup_{T>0,x\in\mathbb{R}^3}  T^{\frac 1 2-\frac 5{2q}} \|\mathds{1}_{R_{T,x}}   u\|_{ \dot {\mathcal{ M}}_2^{\frac 2 5 q,q}})^2
            \|v\|_{L^2L^2}^2.   \end{split} \end{equation*} 
The theorem is proved.
 \end{proof}


\section*{Appendix: counter-examples.}

\begin{proposition}\label{propy2}$\ $\\   Let $\sigma_0(\xi)=\vert \xi\vert$. Then $\sigma_0\in \mathfrak{S}_1$ and $B_{\sigma_0}$ is not bounded on $  \mathcal{Y}_2$. It is not bounded as well from $\mathcal{M}^{2,5}_2\times \mathcal{M}^{2,5}_2$ to   $\mathcal{Y}_2$.
 \end{proposition}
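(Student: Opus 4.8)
The plan is to construct a single time-independent pair $u=v$ that defeats both assertions at once. Since $\mathcal{M}^{2,5}_2=\dot{\mathcal{M}}_2^{2,5}$ embeds continuously into $\mathcal{Y}_2$, it is enough to exhibit $u=v$ with $\|u\|_{\dot{\mathcal{M}}_2^{2,5}}<+\infty$ for which $B_{\sigma_0}(u,u)\notin\mathcal{Y}_2$: this yields unboundedness from $\mathcal{M}^{2,5}_2\times\mathcal{M}^{2,5}_2$ to $\mathcal{Y}_2$ directly, and, since $u$ then also has finite $\mathcal{Y}_2$ norm, unboundedness of $B_{\sigma_0}$ on $\mathcal{Y}_2$ as well. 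I would take $u=v=\mathds{1}_E$, independent of $t$, where $E=\bigcup_{k\in\mathbb{Z}}B(ke_1,\tfrac12)$ is a row of disjoint balls strung along the axis $\ell=\mathbb{R}e_1$, so that $uv=\mathds{1}_E=:f$. The whole point of this one-dimensional geometry is that any ball $B(x,r)$ meets at most $C\max(1,r)$ of the balls $B(ke_1,\tfrac12)$; consequently $\iint_{(t-r^2,t+r^2)\times B(x,r)}|u|^2=2r^2\,|E\cap B(x,r)|\lesssim r^5$ for $r\le 1$ and $\lesssim r^3$ for $r\ge 1$, so that $r^{-3/2}\big(\iint_{Q_r}|u|^2\big)^{1/2}$ stays bounded and $u\in\dot{\mathcal{M}}_2^{2,5}$.

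The decisive simplification comes from the fact that $uv=f$ does not depend on $s$, so the time integral can be carried out, $\int_0^t e^{(t-s)\Delta}\,ds=\dfrac{\Id-e^{t\Delta}}{-\Delta}$, giving
$$B_{\sigma_0}(u,u)(t,\cdot)=(\Id-e^{t\Delta})(-\Delta)^{-1/2}f .$$
Write $P:=(-\Delta)^{-1/2}f=c\,|\cdot|^{-2}*f$. Because $f$ is essentially a uniform charge of finite linear density on $\ell$, smeared at scale $1$, one checks that $P$ is bounded, with $P(x)\simeq 1/\mathrm{dist}(x,\ell)$ for $\mathrm{dist}(x,\ell)\ge 1$ and $P=O(1)$ near $\ell$. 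In particular $P$ is locally square integrable, but $\|P\|_{L^2(B(x_0,R))}^2\simeq R\log R$ for $x_0\in\ell$, so $P$ lies strictly outside $\mathcal{Y}_2$ (the normalized norm $R^{-1/2}\|P\|_{L^2(B(x_0,R))}\simeq(\log R)^{1/2}$ diverges).

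It then remains to transfer this failure from $P$ to $B_{\sigma_0}(u,u)=(\Id-e^{t\Delta})P$, and this is the only genuine estimate. For $x_0\in\ell$ and large $T$, I would restrict attention to times $t\in(T/2,T)$ and to the tube $\{x\in B(x_0,\sqrt T):1<\mathrm{dist}(x,\ell)<\sqrt T/C\}$. On this range the heat average satisfies $e^{t\Delta}P(x)\lesssim 1/\sqrt T$, which is $\le\tfrac12 P(x)$ there, so that $B_{\sigma_0}(u,u)(t,x)\gtrsim 1/\mathrm{dist}(x,\ell)$; integrating $\mathrm{dist}(x,\ell)^{-2}$ over the tube and over $t\in(T/2,T)$ gives $\iint_{(0,T)\times B(x_0,\sqrt T)}|B_{\sigma_0}(u,u)|^2\gtrsim T^{3/2}\log\sqrt T$, whence $T^{-3/4}\|B_{\sigma_0}(u,u)\|_{L^2((0,T)\times B(x_0,\sqrt T))}\gtrsim(\log\sqrt T)^{1/2}\to+\infty$ and $B_{\sigma_0}(u,u)\notin\mathcal{Y}_2$.

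The main obstacle is precisely this last step: proving the upper bound $e^{t\Delta}P(x)\lesssim 1/\sqrt T$ near $\ell$ and thereby confirming that subtracting $e^{t\Delta}P$ does not destroy the logarithmically divergent $L^2$-mass of $P$ coming from the intermediate scales $1\ll\mathrm{dist}(x,\ell)\ll\sqrt T$. This is exactly the $w_3$-type term singled out in the proof of Theorem \ref{kocht} as the obstruction to boundedness on $\mathcal{Y}_2$, the term whose control there required the additional $\sup_{t>0}\sqrt t\,\|u(t,\cdot)\|_\infty$ bound absent from $\mathcal{Y}_2$ and from $\dot{\mathcal{M}}_2^{2,5}$; the above construction is designed so that this missing $L^\infty$ control is exactly what fails.
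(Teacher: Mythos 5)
Your proposal is correct, and the estimate you single out as the main obstacle does hold: writing $P=(-\Delta)^{-1/2}\mathds{1}_E$, you have $P(x)\lesssim \min(1,\mathrm{dist}(x,\ell)^{-1})$, so integrating the heat kernel in the $e_1$-direction reduces $e^{t\Delta}P(x)$ to a two-dimensional average $\int W_t^{(2)}(x'-y')\min(1,\vert y'\vert^{-1})\,dy'$, and splitting $\vert y'\vert\le 2\sqrt t$ from $\vert y'\vert>2\sqrt t$ gives the bound $C/\sqrt t$. Granting this, the rest of your chain is sound: $\mathds{1}_E\in\dot{\mathcal{M}}_2^{2,5}$ by the counting argument, the time-integration identity $B_{\sigma_0}(u,u)=(\Id-e^{t\Delta})P$ is legitimate (the defining integral converges absolutely, exactly as in the paper's own computation), the two-sided bound $P\simeq \mathrm{dist}(x,\ell)^{-1}$ for $\mathrm{dist}(x,\ell)\ge 1$ follows from the lattice sum $\sum_k (k^2+d^2)^{-1}\simeq 1/d$, and the tube integration yields the $(\log\sqrt T)^{1/2}$ divergence; since $\dot{\mathcal{M}}_2^{2,5}\hookrightarrow\mathcal{Y}_2$, the single datum indeed defeats both assertions.

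Your route is genuinely different from the paper's in implementation, though it rests on the same mechanism. The paper first uses translation/dilation invariance to reduce boundedness to one estimate on $(0,1)\times[-1,1]^3$, then takes $u_n=v_n=\psi_n(x_1,x_2)$, independent of $t$ and of $x_3$, with $\phi_n=\psi_n^2$ an approximation of the Dirac mass in $\mathbb{R}^2$; time integration gives $(\Id-e^{t\Delta_2})(-\Delta_2)^{-1/2}\phi_n$, the heat term is controlled by $\Vert\phi_n\Vert_1(1+\frac 1t)$ because $\frac 1{\vert y\vert}\in L^1(\mathbb{R}^2)+L^\infty(\mathbb{R}^2)$, and the contradiction comes from a limiting argument: $(-\Delta_2)^{-1/2}\phi_n$ would stay bounded in $L^2((-1,1)^2)$ yet converges in $\mathcal{D}'$ to $c/\vert y\vert$, which is not square-integrable near $0$ in dimension $2$. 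Both proofs thus hinge on the same potential $1/\mathrm{dist}(x,\ell)$ (in the paper $\ell$ is the $x_3$-axis, on which the mass $\phi_n\otimes 1$ concentrates), whose square integrated against the cross-sectional measure $\rho\,d\rho$ produces a logarithm; the paper realizes the divergence at small cross-sectional scales, and only in the limit $n\to\infty$, whereas you realize it at the large scales $1\ll\rho\ll\sqrt T$ with one fixed datum. What your version buys: no scaling reduction, no approximation-of-identity or distributional-limit step, and a strong form of unboundedness — an explicit element of $\dot{\mathcal{M}}_2^{2,5}\subset\mathcal{Y}_2$ sent by $B_{\sigma_0}$ to a function of infinite $\mathcal{Y}_2$ norm, which also displays concretely that the failure is the missing $\sup_t\sqrt t\,\Vert u(t,\cdot)\Vert_\infty$ control of the $w_3$ term. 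What the paper's version buys: the dimensional reduction makes every formula exact (a clean Fourier computation, no lattice sums, no two-sided bounds on $P$, no heat-average estimate), so the verification is shorter and essentially self-checking.
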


\begin{proof} Due to the invariance of the norms of $\mathcal{Y}_2$ a:nd $ \mathcal{M}^{2,5}_2$ through translations and dilations, the operator $B_{\sigma_0}$ would be bounded    from $\mathcal{M}^{2,5}_2\times \mathcal{M}^{2,5}_2$ to   $\mathcal{Y}_2$.
 if and only if there would exist a constant $C_0$ such that, for every $u,v\in  \mathcal{M}^{2,5}_2$,
$$ \int_0^1 \int_{[-1,1]^3} \vert  B_{\sigma_0}(u,v)\vert^2 \, dt\, dx\leq C_0 \|u\|_{ \mathcal{M}^{2,5}_2}^2  \|v\|_{ \mathcal{M}^{2,5}_2}^2.
$$ 
We then take $u_n(t,x)=v_n(t,x) = \psi_n(x_1,x_2)$ with $\psi_n\in L^2(\mathbb{R}^2)$. We have, for $T>0$,, $t_0>0$ and $x_0\in \mathbb{R}^3$,
$$ \int_{t_0}^{t_0+T }\int_{B(x_0,\sqrt T)} \vert u_n(t,x)\vert^2\, dt\, dx\leq   2 \|\psi_n\|_2^2 T^{3/2}.$$ Thus, $u_n\in  \mathcal{M}^{2,5}_2$. 

Moreover, for $w_n(x)=u_n(t,x)^2$ and $\phi_n=\psi_n^2$, we have the Fourier transforms
$$ \mathcal{F}(e^{(t-s)\Delta}\sigma_0(D)w_n)= e^{-(t-s)\vert\xi\vert^2} \vert\xi\vert (\hat {\phi_n}(\xi_1,\xi_2)\otimes\delta(\xi_3))$$ hence $$ \mathcal{F}(e^{(t-s)\Delta}\sigma_0(D)w_n)=   \left(e^{-(t-s)\vert(\xi_1,\xi_2)\vert^2} \vert(\xi_1,\xi_2)\vert \hat {\phi_n}(\xi_1,\xi_2) \right)\otimes\delta(\xi_3).$$ Hence, writing $\Delta_2$ for the laplacian operator on $\mathbb{R}^2$, we get 
 $$B_{\sigma_0}(u_n,u_n)=\int_0^t e^{(t-s)\Delta_2}\sqrt{-\Delta_2} \phi_n\, ds=  (\Id-e^{t\Delta_2}) \frac 1{\sqrt{-\Delta_2}}\phi_n.$$ From $\frac 1{\vert (y_1,y_2)\vert}\in L^1(\mathbb{R}^2)+L^\infty(\mathbb{R}^2)$, we find
$$ \vert e^{t\Delta_2} \frac 1{\sqrt{-\Delta_2}}\phi_n\vert \leq C \|\phi_n\|_1 (1+\frac 1 t)$$ and thus
$$ \int_{1/2}^1 \int_{[-1,1]^3}    \vert e^{t\Delta_2} \frac 1{\sqrt{-\Delta_2}}\phi_n  \vert^2 \, dt\, dx\leq 36 C^2 \|\phi_n\|_1^2.$$
In particular,
$$ \int_{1/2}^1 \int_{[-1,1]^3} \vert  \frac 1{\sqrt{-\Delta_2}}\phi_n\vert^2 \, dt\, dx\leq 8 (8 C_0+36 C^2)  \|\phi_n\|_1^2.
$$
Thus, if $\phi_n$ is an approximation of the Dirac mass with $\|\phi_n\|_1=1$, we find that the function $ \frac 1{\sqrt{-\Delta_2}}\phi_n$ is bounded in $L^2((-1,1)^2)$; but it converges in $\mathcal{D}'((-1,1)^2)$ to $\frac{\sqrt{2\pi}}{\vert y\vert} $ which is not square-integrable on $(-1,1)^2$. Thus, $B_{\sigma_0}$ is not bounded  from $\mathcal{M}^{2,5}_2\times \mathcal{M}^{2,5}_2$ to     $\mathcal{Y}_{2}$.
\end{proof}

\begin{proposition}\label{propy3}$\ $\\   Let $\sigma_0(\xi)=\vert \xi\vert$. Then $\sigma_0\in \mathfrak{S}_1$ and $B_{\sigma_0}$ is not bounded from $  \mathcal{Y}_{KT}\times L^2\mathrm{A}$ to $L^2\mathrm{A}$ nor to $  \mathcal{Y}_{KT}$ .\end{proposition}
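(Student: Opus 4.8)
The plan is to follow the scheme of the proof of Proposition \ref{propy2}: using the invariance of the norms of $\mathcal{Y}_{KT}$ and of $L^2\mathrm{A}$ under space translations and parabolic dilations, boundedness of $B_{\sigma_0}$ would be equivalent to a scale invariant bilinear inequality, so it suffices to exhibit a family of test functions along which the ratio of the output norm to the product $\|u\|_{\mathcal{Y}_{KT}}\|v\|_{L^2\mathrm{A}}$ diverges. As in Proposition \ref{propy2}, the relevant feature of $\sigma_0(\xi)=|\xi|$ is that, on inputs frozen in time over the active window, one has $B_{\sigma_0}(u,v)=\frac{\Id-e^{t\Delta}}{\sqrt{-\Delta}}(uv)$, so that the obstruction is carried by the order one Riesz potential $\frac1{\sqrt{-\Delta}}$ of the product $uv$. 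This potential is unbounded into \emph{both} targets: its kernel behaves like $|y|^{-2}$, which on $\mathbb{R}^3$ is neither locally square integrable nor bounded (this breaks the $\mathcal{Y}_2$ and the $\mathcal{Z}_0$ part of $\|\cdot\|_{\mathcal{Y}_{KT}}$), while its Fourier symbol $|\xi|^{-1}$ fails to be integrable at infinity (this breaks the $\mathrm{A}$ norm).

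The essential difference with Proposition \ref{propy2} is that there one factor lived in $\dot{\mathcal{M}}_2^{2,5}$ and could be taken to spike (a profile $\psi_n$ with $\psi_n^2\to\delta$), producing the singularity $\frac1{\sqrt{-\Delta}}\delta$ outright. Here both $\mathcal{Y}_{KT}$ and $L^2\mathrm{A}$ embed, at each fixed time, into $L^\infty$ in the space variable, so $uv(t,\cdot)$ stays bounded and no Dirac mass can be manufactured cheaply; the failure is therefore only borderline (logarithmic). I would exploit the one concentration that $\mathcal{Y}_{KT}$ does permit, namely the self similar concentration as $t\to0$ (for instance $u(t,x)=t^{-1/2}\mathds{1}_{|x|<\sqrt t}$ lies in $\mathcal{Y}_{KT}$), and take $v$ a self similar profile admissible in $L^2\mathrm{A}$ only up to a logarithmic truncation at a small scale $\epsilon$, whose norm then grows like $\sqrt{\log(1/\epsilon)}$. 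The aim is to tune the time profiles so that the contributions of all the $\log(1/\epsilon)$ dyadic scales feed \emph{coherently} into $B_{\sigma_0}(u,v)$ at a fixed later time, the slow (non decaying) Riesz character of $B_{\sigma_0}$ preventing the scales from decoupling, so that the output norm grows faster than $\sqrt{\log(1/\epsilon)}$.

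For both targets I would pass to the Fourier side, where $\widehat{B_{\sigma_0}(u,v)}(t,\xi)=|\xi|\int_0^t e^{-(t-s)|\xi|^2}\widehat{uv}(s,\xi)\,ds$ carries a positive time weight. Choosing inputs with nonnegative Fourier transform removes the radial cancellation and lets one bound the output norm from below by the corresponding weighted integral of $|\widehat{uv}|$, reducing matters to the divergence of $\int|\xi|^{-1}\,d\xi$ at infinity for the $L^2\mathrm{A}$ target and, after returning to physical space, to the failure of local square integrability of $|y|^{-2}$ for the $\mathcal{Y}_{KT}$ target; one must also check that the factor $1-e^{-(t-s)|\xi|^2}$ does not annihilate the frequencies carrying the singularity.

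The main obstacle is precisely this lower bound. A single admissible pair cannot realise the $\frac1{\sqrt{-\Delta}}\delta$ singularity, and the naive self similar family produces an output that merely matches the product of the input norms; what has to be established is a genuine logarithmic loss, namely that the contributions of the various scales add coherently and are neither recovered by the smoothing of the heat semigroup nor destroyed by the oscillation and cancellation inherent in the $\mathrm{A}$ (Fourier $L^1$) norm. Quantifying this interaction across scales, rather than any one of the individual estimates, is where the real difficulty lies.
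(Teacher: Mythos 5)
Your proposal stops exactly where a proof would have to begin: you yourself identify the ``genuine logarithmic loss'' (coherent addition of the $\log(1/\epsilon)$ scales) as the main obstacle and you do not establish it, so neither non-boundedness claim is actually proved. Moreover, the route you sketch cannot deliver it. If $u$ and $v$ concentrate self-similarly as $t\to 0$, their product at time $s$ carries its Fourier mass at $|\xi|\sim s^{-1/2}$, and at any \emph{fixed later} time $t$ the propagator contributes $e^{-(t-s)|\xi|^2}\sim e^{-(t-s)/s}$, which is exponentially small for $s\ll t$: the heat semigroup decouples the scales rather than letting them add, so the output at time $t$ only sees the scales $s\sim t$, and the output norm is comparable to $\|u\|_{\mathcal{Y}_{KT}}\|v\|_{L^2\mathrm{A}}$ --- exactly the ``no gain'' you concede for the naive family, and no tuning of the time profiles removes this decoupling. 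Your diagnosis of the singularity is also off target: since $\int_0^te^{-(t-s)|\xi|^2}|\xi|\,ds=\frac{1-e^{-t|\xi|^2}}{|\xi|}$ is a \emph{bounded} function of $\xi$, neither the non-integrability of $|\xi|^{-1}$ at infinity nor the local non-square-integrability of $|y|^{-2}$ is by itself an obstruction once $uv(t,\cdot)$ lies in $\mathrm{A}$ or in $L^2$; the dangerous factor is rather the unintegrated symbol $|\xi|e^{-(t-s)|\xi|^2}\lesssim (t-s)^{-1/2}$, i.e.\ the time singularity.

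For comparison, the paper uses two different and more specific mechanisms, neither of which is cross-scale coherence from $t\to0$. For the target $L^2\mathrm{A}$ no concentration is needed at all: take $u=\mathds{1}_{(0,2)}(t)\,x_1/|x_1|\in\mathcal{Y}_{KT}$ and $v=\mathds{1}_{(0,2)}(t)\phi(x_1)\psi(x_2,x_3)$ with $\hat\psi$ supported in an annulus; then $\mathcal{F}(B_{\sigma_0}(u,v))=2\pi\frac{1-e^{-t|\xi|^2}}{|\xi|}H(\hat\phi)(\xi_1)\hat\psi(\xi_2,\xi_3)$, where $H$ is the Hilbert transform, so boundedness would give $\int_{-1}^1|H(\hat\phi)(\xi_1)|\,d\xi_1\le C\|\hat\phi\|_1$, and dilating $\phi\mapsto\phi(\cdot/R)$ with $R\to\infty$ would make $H$ bounded on $L^1$, a contradiction: the true obstruction is that multiplication by the bounded function $x_1/|x_1|$ does not preserve $\mathcal{F}L^1$, not any Riesz potential. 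For the target $\mathcal{Y}_{KT}$ concentration \emph{is} used, but at the evaluation time rather than at $t=0$: with non-negative $\hat\phi,\hat\psi$ ($\hat\psi$ supported in an annulus), $u=\mathds{1}_{(0,1)}(t)\,\phi_{\sqrt{1-t}}$ and $v=\mathds{1}_{(0,1)}(t)\,(1-t)^{-1/2}|\ln(2-2t)|^{-3/4}\psi_{\sqrt{1-t}}$, the Fourier support at time $s$ lies at $|\xi|\sim(1-s)^{-1/2}$, so at $t=1$ one has $|\xi|e^{-(1-s)|\xi|^2}\gtrsim(1-s)^{-1/2}$ with \emph{no} exponential loss (because $t-s=1-s\sim|\xi|^{-2}$ there), and positivity of the Fourier transform yields $\|B_{\sigma_0}(u,v)(1,\cdot)\|_\infty\gtrsim\int_0^1\frac{ds}{(1-s)|\ln(2-2s)|^{3/4}}=+\infty$, while $\|v\|^2_{L^2\mathrm{A}}\sim\int_0^1\frac{ds}{(1-s)|\ln(2-2s)|^{3/2}}<+\infty$. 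The two ideas missing from your plan are thus (i) an anisotropic, non-concentrating example reducing the first claim to a classically unbounded operator, and (ii) a concentration whose singular time coincides with the evaluation time, with a logarithmic amplitude exploiting the gap between the exponents $3/4$ and $3/2$.
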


\begin{proof}
Let $u(t,x)=\mathds{1}_{(0,2)}(t) \frac{x_1}{\vert x_1\vert}$ and $v(t,x)=\mathds{1}_{(0,2)}(t) \phi(x_1) \psi(x_2,x_3)$ where the Fourier transforms of $\phi$ and $\psi$ are integrable (over $\mathbb{R}$ and over $\mathbb{R}^2$ respectively) and the support of the Fourier transform of $\psi$ is contained in the corona $1<\xi_2^2+\xi_3^2<4$. Then $u\in \mathcal{Y}_{KT}$ and $v\in L^2\mathrm{A}$.

 For $t\in (0,2)$, we have
 $$ \mathcal{F}(B_{\sigma_0} (u,v))= 2\pi\int_0^t e^{-(t-s)\vert \xi\vert^2} \vert \xi\vert  H(\hat \phi)(\xi_1) \hat \psi(\xi_2,\xi_3)\, ds= 2\pi\frac{1-e^{-t\vert\xi\vert^2}}{\vert \xi\vert}  H(\hat \phi)(\xi_1) \hat \psi(\xi_2,\xi_3)$$ where $H$ is the Hilbert transform. For $1<t<2$, we have
 $$ \int \vert \mathcal{F}(B_{\sigma_0} (u,v))\, d\xi \vert \geq \pi (1-e^{-1})  \int_{-1}^1 \vert H(\hat \phi)(\xi_1) \vert \, d\xi_1 \iint \frac{\vert \hat \psi(\xi_2,\xi_3)\vert }{\vert (\xi_2,\xi_3)\vert}\, d\xi_2\, d\xi_3$$ and thus
 $$   \int_{-1}^1 \vert H(\hat \phi)(\xi_1) \vert \, d\xi_1 \leq C \|B_{\sigma_0} (u,v)\|_{L^2\mathrm{A}}$$ whereas
 $$ \|u\|_{\mathcal{Y}_{KT}} \|v\|_{L^2\mathrm{A}}\leq C \|\hat\phi\|_1.$$  Assuming that $B_{\sigma_0}$ is   bounded from $  \mathcal{Y}_{KT}\times L^2\mathrm{A}$ to $L^2\mathrm{A}$, we obtain that, for $\phi\in L^1(\mathbb{R})$,  $$   \int_{-1}^1 \vert H(\hat \phi)(\xi_1) \vert \, d\xi_1\leq C \|\hat\phi\|_1.$$ Applying this to $\phi_R=\phi(\frac x R)$, we have
  $$   \int_{-R}^R \vert H(\hat \phi)(\xi_1) \vert \, d\xi_1=     \int_{-1}^1 \vert H(\widehat{ \phi_R})(\xi_1) \vert \, d\xi_1\leq C \|\widehat{\phi_R}\|_1=C \|\hat\phi\|_1.$$ Letting $R$ go to $+\infty$, we find that the Hilbert transform is bounded on $L^1$, which is false.  Thus,  $B_{\sigma_0}$ cannot be bounded from $  \mathcal{Y}_{KT}\times L^2\mathrm{A}$ to $L^2\mathrm{A}$.

  Now, consider a function $\phi$ whose Fourier transform $\hat\phi\in\mathcal{D}(\mathbb{R}^3)$ is non-negative, supported in $B(0,1)$ and with $\|\hat\phi\|_1=1$. Similarly, consider a function $\psi$ whose Fourier transform $\hat\phi\in\mathcal{D}(\mathbb{R}^3)$ is non-negative, supported in $B(0,4)\setminus B(0,2)$ and with $\|\hat\psi\|_1=1$. Writing, for a function $f$ on $\mathbb{R}^3$ and for $\epsilon>0$, $f_\epsilon(x)= f(\frac x\epsilon)$, we define $$u(t,x)=\mathds{1}_{(0,1)}(t) \phi_{\sqrt{1-t}}(x) \text{  and } v(t,x)=\mathds{1}_{(0,1)}(t) \frac 1{\sqrt{1-t} \vert \ln (2-2t )\vert^{3/4}}  \psi_{\sqrt{1-t}}(x).$$    Let us assume that  $B_{\sigma_0}$ is bounded from $  \mathcal{Y}_{KT}\times L^2\mathrm{A}$ to $\mathcal{Y}_{KT}$. As $u\in \mathcal{Y}_{KT}$ and $v\in L^2\mathrm{A}$, we should have $\sup_{t>0}\sqrt t \|B_{\sigma_0}(u,v)(t,.)\|_\infty<+\infty$, where the supremum bound  holds for every $t>0$ by weak-* continuity of $t>0\mapsto B_{\sigma_0}(u,v)(t,.)\in L^\infty$. In particular, $B_{\sigma_0}(u,v)(1,.)\in L^\infty$. The Fourier transform $B_{\sigma_0}(u,v)(1,.)$ is non-negative, as
  $$\mathcal{F}(B_{\sigma_0}(u,v)(1,.)) = \int_0^1 e^{-(1-s)\vert\xi\vert^2} \vert \xi\vert \frac 1{\sqrt{1-s} \vert \ln (2-2s )\vert^{3/4}} \mathcal{F}(\phi_{\sqrt{1-s}}\psi_{\sqrt{1-s}})(\xi)\, ds,$$ and thus
  $$ \|B_{\sigma_0}(u,v)(1,.)\|_\infty=\frac 1{(2\pi)^3 } \int \mathcal{F}(B_{\sigma_0}(u,v)(1,.))(\xi)\, d\xi.$$ The support of $\mathcal{F}(\phi_{\sqrt{1-s}}\psi_{\sqrt{1-s}})(\xi)$ is contained in $\{\xi\ /\ \frac 1{\sqrt{1-s}}\leq \vert \xi\vert\leq \frac 5{\sqrt{1-s}}\}$
  and thus, for a constant $\gamma>0$, 
  $$ \int \mathcal{F}(B_{\sigma_0}(u,v)(1,.))(\xi)\, d\xi\geq \gamma \int_0^1\int \frac 1{(t-s)\vert \ln (2-2s )\vert^{3/4}} \mathcal{F}(\phi_{\sqrt{1-s}}\psi_{\sqrt{1-s}})(\xi)\, ds\, d\xi$$ and thus
\begin{equation*} \int \mathcal{F}(B_{\sigma_0}(u,v)(1,.))(\xi)\, d\xi\geq \gamma \|\hat\phi\|_1  \|\hat\psi\|_1  \int_0^1  \frac 1{(t-s)\vert \ln (2-2s )\vert^{3/4}}  \,  ds =+\infty.  \end{equation*} Thus,  $B_{\sigma_0}$ cannot be bounded from $  \mathcal{Y}_{KT}\times L^2\mathrm{A}$ to $\mathcal{Y}_{KT}$.
\end{proof}

 \begin{proposition}\label{propy4}$\ $\\    The space $L^2\mathrm{A}$ is not included in $\mathcal{M}(\dot H^{1/2,1}_{t,x}\mapsto L^2_{t,x})$. \end{proposition}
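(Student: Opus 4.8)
The plan is to disprove the inclusion by exhibiting a one–parameter family $(u_\delta)_{0<\delta<1}$ that stays bounded in $L^2\mathrm A$ while its norms in $\mathcal M(\dot H^{1/2,1}_{t,x}\mapsto L^2_{t,x})$ blow up. By the closed graph theorem an inclusion $L^2\mathrm A\subset\mathcal M(\dot H^{1/2,1}_{t,x}\mapsto L^2_{t,x})$ would force a uniform bound $\|u\|_{\mathcal M}\le C\|u\|_{L^2\mathrm A}$, so such a family gives the desired contradiction. The guiding idea is that the two norms perceive a concentrating function very differently: $L^2\mathrm A$ only measures, slice by slice in time, the Wiener norm $\|\widehat{u(t,\cdot)}\|_{L^1}$, and is therefore content with mass concentrated on a thin time–slab $t\approx 1$ provided the amplitude is compensated in $L^2_t$; the multiplier space, by contrast, must control $\|uf\|_{L^2_{t,x}}$ against $\|f\|_{\dot H^{1/2,1}_{t,x}}$, and time–slices $\{t=t_0\}$ are exactly the borderline (parabolic codimension two) sets for the order–one operator defining $\dot H^{1/2,1}_{t,x}$, where the relevant capacity degenerates only \emph{logarithmically}.

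Concretely, I would fix $\theta\in\mathcal S(\mathbb R^3)$, $\theta\not\equiv 0$, and set
$$ u_\delta(t,x)=\delta^{-1/2}\,\mathds{1}_{(1-\delta,1+\delta)}(t)\,\theta(x). $$
Since $\mathrm A$ is computed per time–slice, $\|\widehat{u_\delta(t,\cdot)}\|_{L^1}=\delta^{-1/2}\mathds{1}_{(1-\delta,1+\delta)}(t)\|\widehat\theta\|_{L^1}$, whence $\|u_\delta\|_{L^2\mathrm A}=\sqrt 2\,\|\widehat\theta\|_{L^1}$, independent of $\delta$. To bound $\|u_\delta\|_{\mathcal M}$ from below I would test against a tensor product $f_\delta(t,x)=\phi_\delta(t)\,\psi(x)$, with $\psi$ a fixed bump chosen so that $\int|\theta\psi|^2>0$, and $\phi_\delta\equiv 1$ on $(1-\delta,1+\delta)$. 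On one hand the product retains a fixed amount of mass,
$$ \|u_\delta f_\delta\|_{L^2_{t,x}}^2\ge \delta^{-1}\!\int_{1-\delta}^{1+\delta}\!dt\;\int|\theta\psi|^2\,dx\;\gtrsim\;1. $$
On the other hand, for a tensor product one has the exact splitting
$$ \|f_\delta\|_{\dot H^{1/2,1}_{t,x}}^2=\|\phi_\delta\|_{\dot H^{1/2}_t}^2\,\|\psi\|_{L^2}^2+\|\phi_\delta\|_{L^2_t}^2\,\|\psi\|_{\dot H^1}^2, $$
so with $\psi$ fixed the right–hand side is comparable to the inhomogeneous norm $\|\phi_\delta\|_{H^{1/2}(\mathbb R)}^2$. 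Choosing $\phi_\delta$ to be the logarithmic profile with inner scale $\delta$ and outer scale $\Delta=\sqrt\delta$ makes $\|\phi_\delta\|_{\dot H^{1/2}_t}^2\sim 1/\log(1/\delta)$ while $\|\phi_\delta\|_{L^2_t}^2\sim\sqrt\delta\to0$, so that $\|f_\delta\|_{\dot H^{1/2,1}_{t,x}}^2\sim 1/\log(1/\delta)$. Combining the two estimates gives $\|u_\delta\|_{\mathcal M}\gtrsim(\log(1/\delta))^{1/2}\to+\infty$, contradicting the uniform bound.

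The only genuinely delicate point is this last step: the logarithmic capacity estimate for the shrinking time interval, namely the construction of $\phi_\delta$ with $\phi_\delta\equiv1$ on $(1-\delta,1+\delta)$, $\|\phi_\delta\|_{\dot H^{1/2}(\mathbb R)}^2\lesssim 1/\log(1/\delta)$ and $\|\phi_\delta\|_{L^2}^2\to0$ \emph{simultaneously}. This is the classical fact that points carry zero $\dot H^{1/2}(\mathbb R)$–capacity at a logarithmic rate (equivalently, the logarithmic condenser capacity of two nested intervals), which is exactly the analytic manifestation of the borderline, parabolic–codimension–two position of time–slices for $\dot H^{1/2,1}_{t,x}$. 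I would realize $\phi_\delta$ explicitly through a logarithmic cut–off $\phi_\delta(t)\approx\min\!\big(1,(\log(\Delta/|t-1|))_+/\log(\Delta/\delta)\big)$ and compute its Gagliardo/half–Laplacian energy directly, checking that the choice $\Delta=\sqrt\delta$ keeps both the homogeneous and the inhomogeneous contributions of order $1/\log(1/\delta)$. The remaining verifications — that $\theta\in\mathrm A$, that the per–slice Wiener computation of $\|u_\delta\|_{L^2\mathrm A}$ is exact, and that the tensor splitting above holds — are routine.
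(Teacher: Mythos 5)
Your proof is correct, but it takes a genuinely different route from the paper's. The shared step is the functional-analytic reduction: both arguments observe that a set-theoretic inclusion would automatically be continuous. The paper is more careful here than you are --- it derives continuity from Baire's theorem together with the Fatou property of $\mathcal{M}(\dot H^{1/2,1}_{t,x}\mapsto L^2_{t,x})$, which is exactly what makes the graph of the inclusion closed --- so you should either flesh out that one-line appeal, or bypass it entirely: since the multiplier norm is solid and invariant under time translation, the single function $\sum_n n^{-1}u_{\delta_n}(t-T_n,x)$ with $\delta_n=e^{-n^4}$ and disjoint time slabs lies in $L^2\mathrm{A}$ but not in $\mathcal{M}$, so no closed graph argument is needed. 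After that the two proofs diverge completely. The paper never estimates a multiplier norm: it imports two nontrivial cited facts --- boundedness of the parabolic Riesz bilinear operator $T(u,v)=\int_0^t\int(\sqrt{t-s}+\vert x-y\vert)^{-4}uv\,dy\,ds$ on $\mathcal{M}(\dot H^{1/2,1}_{t,x}\mapsto L^2_{t,x})$, and the embedding of this space into $\dot{\mathcal{M}}^{2,5}_2$ --- to deduce the inequality $\Vert \mathds{1}_{(0,1)\times B(0,1)}T(u,u)\Vert \lesssim \Vert u\Vert_{L^2\mathrm{A}}^2$, and then defeats it by spreading in space ($u_R=\omega(t)e^{-\vert x\vert^2/R}$, $R\to\infty$, which reduces matters to the one-dimensional inequality $\int_0^1\bigl(\int_0^t \omega^2(s)(t-s)^{-1/2}ds\bigr)^2dt\lesssim\Vert\omega\Vert_2^4$) and concentrating in time ($\omega_\epsilon^2\to\delta_0$, so that Fatou forces $\int_0^1 dt/t<+\infty$, absurd). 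You use the same concentration mechanism in time (height $\delta^{-1/2}$ on a slab of width $\delta$, which $L^2\mathrm{A}$ does not see), but you detect the blow-up directly in the multiplier norm by duality, testing against a tensor product whose time factor is a logarithmic condenser cutoff; your key input, $\Vert\phi_\delta\Vert_{\dot H^{1/2}(\mathbb{R})}^2\lesssim 1/\log(1/\delta)$, is the classical capacity estimate (provable by the Dirichlet principle applied to the radial logarithmic extension to the half-plane), and your bookkeeping is right: with outer scale $\Delta=\sqrt\delta$ the $L^2_t$ contribution $O(\sqrt\delta)$ is negligible, the product keeps unit $L^2$ mass, and $\Vert u_\delta\Vert_{\mathcal{M}}\gtrsim\sqrt{\log(1/\delta)}$. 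The trade-off: the paper's proof is short given the cited machinery about $\mathcal{M}$; yours is self-contained, needing only the definition of the multiplier space, the exact tensor splitting of the $\dot H^{1/2,1}_{t,x}$ norm, and a textbook capacity computation, and it yields a quantitative rate which pinpoints why the inclusion fails: time slices have parabolic codimension two, so their $\dot H^{1/2,1}$ capacity degenerates only logarithmically, whereas $L^2\mathrm{A}$ is completely blind to them.
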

 \begin{proof}
If $L^2\mathrm{A} \subset\mathcal{M}(\dot H^{1/2,1}_{t,x}\mapsto L^2_{t,x})$, then the embedding would be continuous (by Baire's theorem, since convergence of a sequence  in $L^2\mathrm{A}$ implies convergence almost everywhere of a subsequence and since $\mathcal{M}(\dot H^{1/2,1}_{t,x}\mapsto L^2_{t,x})$ has the Fatou property that an almost everywhere converging sequence of bounded functions in $\mathcal{M}(\dot H^{1/2,1}_{t,x}\mapsto L^2_{t,x})$  remains in $\mathcal{M}(\dot H^{1/2,1}_{t,x}\mapsto L^2_{t,x})$  with the same bound).

Since the operator $T$ defined by $$T(u,v)(t,x)=\int_0^t \int \frac 1{(\sqrt{t-s}+\vert x-y\vert)^4} u(s,y)v(s,y)\, ds\, dy$$ is bounded on $\mathcal{M}(\dot H^{1/2,1}_{t,x}\mapsto L^2_{t,x})$  and since $\mathcal{M}(\dot H^{1/2,1}_{t,x}\mapsto L^2_{t,x})\subset \dot {\mathcal{M}}_2^{2,5}$, we would have the inequality
$$ \int_0^1\int_{B(0,1)} \vert T(u,u)(t,x)\vert\, dt\, dx\leq C \|u\|_{L^2\mathrm{A}}^2.$$
Let $u_R(t,x)=\omega(t) e^{-\frac{\vert x\vert^2}R}$. By monotonous convergence, we have pointwise  convergence of $T(u_R,u_R)(t,x)$ to 
$$\int_0^t \int \frac 1{(\sqrt{t-s}+\vert x-y\vert)^4} \omega^2(s), ds\, dy= C \int_0^t \frac 1{\sqrt{t-s}} \omega^2(s)\, ds.$$ Thus, we would have
$$\int_0^1 \left( \int_0^t \frac 1{\sqrt{t-s}} \omega^2(s)\, ds\right)^2\, dt \leq C \|\omega\|_2^4.$$
For $\omega_\epsilon(t)=\frac 1{\sqrt{\epsilon}} \theta(\frac x\epsilon)$ with $\theta\in\mathcal{D}$ and $\|\theta\|_2=1$, we have pointwise convergence (when $\epsilon\rightarrow 0$) of $\int_0^t \frac 1{\sqrt{t-s}} \omega_\epsilon^2(s)\, ds$ to $\frac 1{\sqrt t}$; Fatou's lemma would give $\int_0^1 \frac{dt}t\leq C$, which is false.
\end{proof}

\end{document}